\let\oldsqrt\sqrt
\def\sqrt{\mathpalette\DHLhksqrt}
\def\DHLhksqrt#1#2{%
	\setbox0=\hbox{$#1\oldsqrt{#2\,}$}\dimen0=\ht0
	\advance\dimen0-0.2\ht0
	\setbox2=\hbox{\vrule height\ht0 depth -\dimen0}%
	{\box0\lower0.4pt\box2}}
\newcommand{\R}{\mathbb{R}} % reelle Zahlen
\renewcommand{\phi}{\varphi}
\newcommand{\cC}{{\mathcal C}}
\newcommand{\cH}{{\mathcal H}}
\newcommand{\cK}{{\mathcal K}}
\newcommand{\cN}{{\mathcal N}}
\theoremstyle{definition}
\newtheorem{defi}{Definition}[section]
\newtheorem{remark}[defi]{Remark}
\theoremstyle{plain} %default%plain
\newtheorem{thm}[defi]{Theorem}
\newtheorem{prop}[defi]{Proposition}
\newtheorem{lemma}[defi]{Lemma}
\theoremstyle{definition}
\numberwithin{equation}{section}
\title[Symmetry breaking and multiplicity for supercritical Hamiltonian systems]{Symmetry breaking and multiplicity for supercritical elliptic Hamiltonian systems in exterior domains} 
\author[Remi Yvant Temgoua]{Remi Yvant Temgoua} 
\address{School of Mathematics and Statistics, Carleton University, Ottawa, Ontario, Canada.}
\email{remiyvanttemgoua@cunet.carleton.ca} 
\date{\today}
\begin{document}
\maketitle

\begin{abstract}
	We consider positive solutions of the following elliptic Hamiltonian systems
\begin{equation}\label{e_0}
\left\{
\begin{aligned}
-\Delta u+u&=a(x)v^{p-1}~~~~~\text{in}~~A_R\\
-\Delta v+v&=b(x)u^{q-1}~~~~~\text{in}~~A_R\\
u, v&>0~~~~~~~~~~~~~~~\text{in}~~A_R\\
u=v&=0~~~~~~~~~~~~~~~\text{on}~~\partial A_R,
\end{aligned}
\right.
\end{equation}
where $A_R=\{x\in\R^N: |x|>R\},~R>0,~N>3$, and $a(x)$ and $b(x)$ are positive continuous functions. Under certain symmetry and monotonicity properties on $a(x)$ and $b(x)$, we prove that \eqref{e_0} has a positive solution for $(p,q)$ above the standard critical hyperbola, that is, $\frac{1}{p}+\frac{1}{q}<1-\frac{2}{N}$, enjoying the same symmetry and monotonicity properties as the weights $a$ and $b$. In the case when $a(x)=b(x)=1$, our result ensures multiplicity as it provides $\Big\lfloor\frac{N}{2}\Big\rfloor-1$ (being $\lfloor\frac{N}{2}\rfloor$ the floor of $\frac{N}{2}$) non-radial positive solutions provided that
 \begin{equation}
        (p-1)(q-1)>\Big(1+\frac{2N}{\Lambda_H}\Big)^2\Big(\frac{q}{p}\Big),
    \end{equation}
where $\Lambda_H$ is the optimal constant in Hardy inequality for the domain $A_R$.
	
	%Our result provides for the first time a positive answer to the conjecture given in \cite[Page 4]{fall2022existence}, in the case of a large annulus. 
\end{abstract}
{\footnotesize
	\begin{center}
		\textit{Keywords.} Hamiltonian systems, Symmetry breaking, Multiplicity, Hardy inequality.
	\end{center}
 \textit{~~~~~~~~~~2020 Mathematics Subject Classification:} 35B06, 35B09, 35J50.
}

\section{Introduction and main results}\label{section:introduction}
In this paper, we study symmetry breaking and multiplicity of positive solutions to the problem 
\begin{equation}\label{e-1}
\left\{
\begin{aligned}
-\Delta u+u&=a(x)v^{p-1}~~~~~\text{in}~~A_R\\
-\Delta v+v&=b(x)u^{q-1}~~~~~\text{in}~~A_R\\
u, v&>0~~~~~~~~~~~~~~~\text{in}~~A_R\\
u=v&=0~~~~~~~~~~~~~~~\text{on}~~\partial A_R,
\end{aligned}
\right.
\end{equation}
where $A_R=\{x\in\R^N: |x|>R\}$, $q\geq p>2$, and $a, b\in C(\overline{A_R})$ with $a(x)\geq a_0>0$ and $b(x)\geq b_0>0$ for some positive constants $a_0$ and $b_0$. The exponents $p$ and $q$ are above the critical hyperbola, namely
\begin{equation}\label{supercritical-condition}
    \frac{1}{p}+\frac{1}{q}<1-\frac{2}{N}.
\end{equation}
Notice that condition \eqref{supercritical-condition} is the well-known notion of \textit{supercriticality} associated to \eqref{e-1}, see  \cite{bonheure2014hamiltonian,clement1992positive,peletier1992existence}.

In the previous decades, there has been active research on the study of elliptic systems both in bounded and unbounded domains. Elliptic systems of the type \eqref{e-1} in bounded domains have been studied in \cite{clement1992positive,costa1995variational,figueredo1994on,figueredo1996nonquadratic,hulshof1993differential,peletier1992existence,ramos2008solutions}.
%See also \cite{ramos2005spike} where related problem with Neumann boundary condition is considered
The case of the whole space was studied, among other, in \cite{figueredo1998symmetry,serrin1998existence,sirakov2000existence,yang2001nontrivial}. We stress that only subcritical systems have been considered in these papers. For the supercritical case, we quote e.g., \cite{clapp2004strongly,de2003strongly} in which the systems were considered in bounded domains. We also refer to the recent work \cite{kim2021multiple} where critical Hamiltonian systems of the type \eqref{e-1} were considered in bounded domains.  On the other hand, supercritical elliptic Hamiltonian systems without the zero-order term, that is, systems of the form 
\begin{equation}\label{a0}
\left\{
\begin{aligned}
-\Delta u&=a(x)v^{p-1}~~~~~\text{in}~~\Omega\\
-\Delta v&=b(x)u^{q-1}~~~~~\text{in}~~\Omega\\
u, v&>0~~~~~~~~~~~~~~~\text{in}~~\Omega\\
u=v&=0~~~~~~~~~~~~~~~\text{on}~~\partial\Omega,
\end{aligned}
\right.
\end{equation}
where $\Omega=\{x\in\R^N: 0<R_1<|x|<R_2<\infty\}$ is an annulus, have been studied in \cite{abbas2023existence}. See also \cite{cowan2022supercritical1} where \eqref{a0} was considered in arbitrary bounded domains of $\R^N$ in the special case $a(x)=b(x)=1$. 

Our motivation for the study of problem \eqref{e-1} goes back to the very recent work of Boscaggin et al. \cite{boscaggin2023multiplicity} concerning the scalar equation 
\begin{equation}\label{single-equation}
    \left\{
    \begin{aligned}
        &-\Delta u+u=a(x)u^{p-1}~~~\text{in}~~A_R,\\
        &u>0~~~~~~~~~~~~~~~~~~~~~~~~~\text{in}~~~ A_R,\\
        & u\in H^1_0(A_R).
    \end{aligned} 
    \right.
\end{equation}
In \cite{boscaggin2023multiplicity}, the authors used an abstract result due to Cowan and Moameni \cite{cowan2023supercritical} to obtain the existence and multiplicity results of solutions to \eqref{single-equation} for values of $p$ in a suitable range that includes exponents beyond the classical critical Sobolev exponent. Precisely, they considered \eqref{single-equation} along with the corresponding classical energy functional $I: H^1_0(A_R)\cap L^{p}(A_R)\to\R$ given by
\begin{equation*}
    I(u)=\frac{1}{2}\int_{A_R}(|\nabla u|^2+|u|^2)\ dx-\frac{1}{p}\int_{A_R}a|u|^{p}\ dx.
\end{equation*}
They work on a convex cone $\cK$ which consists of positive functions of the form $u=u(r,\theta)$ which are decreasing with respect to $\theta$, that is, $u_{\theta}\leq0$ (being $u_{\theta}$ the weak partial derivative of $u$ with respect to the variable $\theta$). Moreover, they consider $\cN_{\cK}=\{u\in\cK: u\not\equiv0,~I'(u)u=0\}$, the associated Nehari-type set to $\cK$. Then, they develop a mountain pass argument (which relies on variational principle \cite{cowan2023supercritical}) to prove the existence of a solution $u$ of \eqref{single-equation} such that $u\in\cN_{\cK}$ and $I(u)=c_I>0$ where $c_I:=\inf_{u\in\cN_{\cK}}I(u)$ is the Nehari value. One of the crucial points toward this existence result was to prove that the embedding $\cK\hookrightarrow L^p(A_R)$ is compact for all $p>2$. To this end, the authors in \cite{boscaggin2023multiplicity} combine the monotonicity property of functions of $\cK$ with a compact embedding result due to Willem \cite{willem1997minimax} applied in suitably chosen subdomains of $A_R$. In the special case when the weight $a(x)$ is radial, they also obtained a result regarding nonradial solutions under certain assumptions on $R$ or the value of $p$. 

Motivated by the above discussion, we consider in this paper, a class of elliptic systems that generalizes the case of a single equation explored by authors in \cite{boscaggin2023multiplicity}. The main difficulty in studying problem \eqref{e-1} is the lack of compactness due to both the unboundedness of the domain and the supercritical growth of the nonlinearities. However, as in \cite{boscaggin2023multiplicity}, we recover the compactness under the assumption that the weights $a(x)$ and $b(x)$ satisfy certain symmetry and monotonicity properties, working in a closed convex subset $K$ of $W^{2,p'}(A_R)\cap H^1_0(A_R)$-functions (being $p'$ the conjugate of $p$, that is, $p'=\frac{p}{p-1}$) having the same symmetry and monotonicity features as $a(x)$ and $b(x)$. We stress that our analysis to recover the compactness is more delicate due to both the unboundedness of the domain and the fact that we are dealing with systems. It exploits the symmetry and monotonicity properties of functions in $K$ together with Lyon's \cite{lions1982symetrie} compactness result. 

%First, there is the lack of compactness of the Sobolev embedding, since our domain is unbounded and the problem is in a supercritical regime. Second, it is challenging to find an adequate functional, the critical points of which are still solutions of problem \eqref{e-1}, so that we can apply variational methods.

We now wish to state the main results of this paper. To this end, we write $\R^N=\R^m\times\R^n$ with $N=m+n$. We introduce the following variables 
\begin{equation*}
    s=\sqrt{x_1^2+\cdots+x_m^2}~~~~~~~~~~\text{and}~~~~~~~~~~t=\sqrt{x_{m+1}^2+\cdots+x_N^2}.
\end{equation*}
Let $\widehat{A}_R$ be the subset of $\R^2$ defined by
\begin{equation*}
    \widehat{A}_R:=\{(s,t)\in\R^2: s^2+t^2>R^2, s\geq0, t\geq0\}.
\end{equation*}
In polar coordinates, every $(s,t)\in\widehat{A}_R$ can be written as $s=r\cos\theta$ and $t=r\sin\theta$ where $r=\sqrt{s^2+t^2}=\sqrt{x_1^2+\cdots+x_N^2}\in (R,\infty)$ and $\theta\in \Big[0,\frac{\pi}{2}\Big]$ is defined by 
\begin{equation*}
    \theta=\arctan\Bigg(\frac{\sqrt{x_{m+1}^2+\cdots+x_N^2}}{\sqrt{x_1^2+\cdots+x_m^2}}\Bigg)=\arcsin\Big(\frac{1}{r}\sqrt{x_{m+1}^2+\cdots+x_N^2}\Big).
\end{equation*}
In this paper, we work with functions that depend on $r$ and $\theta$ (or equivalently, functions that depend on $s$ and $t$), namely
\begin{equation*}
    u(x)=u(r,\theta)=u(s,t). 
\end{equation*}
Such functions are said to be invariant under the group action $O(m)\times O(n)$, where $O(k)$ is the orthogonal group in $\R^k$.

To state our main results, we assume that the weights $a$ and $b$ satisfy:
\begin{itemize}
    \item [$(\cH):$] $a(x)=a(s,t)$ (resp. $b(x)=b(s,t)$) is a function of $s$ and $t$, and $a(s,t)$ (resp. $b(s,t)$) is a continuously differentiable function with respect to $(s,t)$ and $sa_t-ta_s\leq0$ (resp. $sb_t-tb_s\leq0$) in $\widehat{A}_R$.
\end{itemize}
The first main result of this paper is concerned with the existence in the supercritical regime. It reads as follows. 
\begin{thm}\label{first-main-result}
    Let $3< N=m+n$ with $1< n\leq m$. Let $R>0$ and assume that $a$ and $b$ satisfy $(\cH)$. Let $q\geq p>2$. If 
    \begin{equation}
        \frac{1}{p}+\frac{1}{q}>1-\frac{2}{n+1}=\min\Big\{1-\frac{2}{m+1}, 1-\frac{2}{n+1}\Big\}~~~~\text{for}~~n>\frac{p+1}{p-1},
    \end{equation}
    then problem \eqref{e-1} has a positive weak solution $(u,v)$ invariant under the group action $O(m)\times O(n)$.
\end{thm}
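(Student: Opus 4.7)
The plan is to reduce the Hamiltonian system \eqref{e-1} to a single quasilinear fourth-order equation by dual inversion, and then apply the variational principle on a convex cone of Cowan and Moameni \cite{cowan2023supercritical}, as done in the scalar setting by \cite{boscaggin2023multiplicity}. Setting $L:=-\Delta+I$, the first equation gives $v=(a^{-1}Lu)^{1/(p-1)}$ whenever $u>0$ and $Lu>0$; substituting into the second yields
\begin{equation*}
L\bigl[(a^{-1}Lu)^{1/(p-1)}\bigr]=b\,u^{q-1}\quad\text{in }A_R,\qquad u=Lu=0\quad\text{on }\partial A_R,
\end{equation*}
with natural energy on $E:=W^{2,p'}(A_R)\cap H^1_0(A_R)$
\begin{equation*}
J(u)=\frac{1}{p'}\int_{A_R}a^{-1/(p-1)}|Lu|^{p'}\,dx-\frac{1}{q}\int_{A_R}b\,(u_+)^q\,dx.
\end{equation*}
The variational analysis is carried out on the closed convex cone
\begin{equation*}
K:=\bigl\{u\in E:\ u\geq 0,\ u=u(s,t),\ su_t-tu_s\leq 0\text{ a.e.\ in }\widehat{A}_R\bigr\}
\end{equation*}
of $O(m)\times O(n)$-invariant, $\theta$-decreasing functions; hypothesis $(\cH)$ is precisely what makes the non-linearity map $K$ into the polar cone in the pointwise sense demanded by the abstract principle.

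The heart of the argument is the compact embedding
\begin{equation*}
K\hookrightarrow L^p(A_R)\cap L^q(A_R)
\end{equation*}
in the range $\frac{1}{p}+\frac{1}{q}>1-\frac{2}{n+1}$ with $n>\frac{p+1}{p-1}$. Following \cite{boscaggin2023multiplicity}, I would split $A_R=\Omega_s\cup\Omega_t$ with $\Omega_s:=A_R\cap\{s\geq t\}$ and $\Omega_t:=A_R\cap\{s\leq t\}$. On $\Omega_s$, the monotonicity $su_t-tu_s\leq 0$ combined with $O(m)$-invariance and integration along the $O(m)$-orbits reduces $L^p$- and $L^q$-norms of elements of $K$ over $\Omega_s$ to integrals of $O(n+1)$-symmetric functions on a half-space of effective dimension $n+1$, where Lions's compactness theorem for symmetric functions \cite{lions1982symetrie} applies; $\Omega_t$ is handled analogously with the roles of $m$ and $n$ exchanged. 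Since $n\leq m$, the binding dimension is $n+1$, giving the stated subcritical threshold, while the auxiliary bound $n>(p+1)/(p-1)$ arises when converting the $W^{2,p'}$ norm into the pointwise Strauss-type decay required for integrability of the tails.

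With compactness in hand, $J|_K$ exhibits standard mountain-pass geometry: $J(0)=0$, $J(tu)\to-\infty$ as $t\to+\infty$ for any $u\in K$ with $\int_{A_R}b\,u^q>0$, and a Sobolev barrier near the origin; moreover $q\geq p>2$ yields $\tfrac{1}{p'}-\tfrac{1}{q}>0$, so the Nehari-type level $c_J:=\inf_{u\in\cN_K}J(u)$ is strictly positive. The Cowan--Moameni principle then produces a nontrivial $u\in K$ which is in fact a critical point of $J$ on the full space $E$; a strong maximum principle applied first to $Lu$ and then to $u$ gives $u>0$ in $A_R$, and $v:=(a^{-1}Lu)^{1/(p-1)}>0$ satisfies the second equation by construction, yielding an $O(m)\times O(n)$-invariant positive solution of \eqref{e-1}.

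The hard part will be the compactness step. Extracting compactness on the supercritical side of the $N$-dimensional critical hyperbola relies on a delicate interplay between the planar reduction afforded by the $O(m)\times O(n)$-symmetry and the pointwise decay imposed by the $\theta$-monotone cone, and it must be realized within the $W^{2,p'}$ framework rather than the simpler $H^1$ setting used in \cite{boscaggin2023multiplicity}. The coupling of two exponents through the Hamiltonian structure is what forces both the effective-dimension bound $1-\tfrac{2}{n+1}$ and the supplementary integrability condition $n>(p+1)/(p-1)$.
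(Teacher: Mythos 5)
Your overall architecture is the right one and matches the paper's: dual inversion to the fourth-order problem on $W^{2,p'}\cap H^1_0$, a $\theta$-monotone $O(m)\times O(n)$-invariant cone $K$, a compact embedding of $K$ into $L^q$, a non-smooth mountain-pass-type argument, and a pointwise-invariance step to return to the system. The paper uses Szulkin's mountain pass theorem rather than a Nehari-level formulation, but that is a cosmetic difference. The real problem is in the compactness step and in your explanation of the two auxiliary hypotheses.

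Your proposed split $A_R=\Omega_s\cup\Omega_t$ with ``$\Omega_t$ handled analogously with the roles of $m$ and $n$ exchanged'' does not give the stated range, and in fact gives the \emph{opposite} of the claimed $\min$. If you genuinely apply the planar reduction separately on $\Omega_s$ (effective dimension $n+1$) and on $\Omega_t$ (effective dimension $m+1$), you need the Sobolev embedding in \emph{both} dimensions, and since $m\geq n$ the binding one is $m+1$: you would be forced into $\frac{1}{p}+\frac{1}{q}>1-\frac{2}{m+1}$, a strictly smaller range than the theorem asserts. (Your sentence ``the binding dimension is $n+1$'' is internally inconsistent with a symmetric treatment of the two subdomains.) The mechanism the paper actually uses in Lemma~\ref{continuous-embedding} is different: because every $u\in K$ satisfies $u_\theta\leq 0$, the angular integral over $\theta\in(\pi/3,\pi/2)$ is \emph{dominated} by a shifted copy of the integral over $\theta\in(0,\pi/3)$, so the entire $L^d$-norm reduces to the region where $s=r\cos\theta\geq R/2$ is bounded away from $0$. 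Only there is the weighted substitution $v(s,y)=s^{(m-1)/d}u(s,y)$ made, and the Sobolev embedding is invoked in dimension $n+1$ alone; the dimension $m+1$ never enters. Monotonicity is a one-directional tool: it transfers mass from large $\theta$ to small $\theta$, i.e.\ from $\Omega_t$ to $\Omega_s$, and cannot be run in reverse. This is precisely what buys the $\max\{(p')^*_{m+1},(p')^*_{n+1}\}$ threshold, equivalently the $\min$ in the theorem.

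Your account of the auxiliary bound $n>\frac{p+1}{p-1}$ is also off. It is not a pointwise Strauss-type decay/integrability condition on the tails. As spelled out in Remark~\ref{rmk2}, $n>\frac{p+1}{p-1}$ is algebraically equivalent to $(n+1)-2p'>0$, which is just the condition that the Sobolev exponent $(p')^*_{n+1}=\frac{(n+1)p'}{(n+1)-2p'}$ is finite; if $n\leq\frac{p+1}{p-1}$, the embedding of $K$ holds into $L^d$ for \emph{all} finite $d$ and Theorem~\ref{first-main-result} holds with no lower bound on $\frac1p+\frac1q$ at all (Remark~\ref{rmk1}(ii)). Finally, going from a Szulkin critical point of $I|_K$ to an actual solution of the system requires more than a remark that $(\cH)$ maps $K$ into a polar cone: one needs the pointwise-invariance Proposition~\ref{point-wise-invariance-single-equation} to solve the two half-systems inside $K$, and then Lemma~\ref{convex-analysis-result} together with Proposition~\ref{existence-of-coulpe-of-solution} to conclude $\widehat u=\overline u$ via strict convexity of $\Psi$; your sketch elides this entirely.
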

\begin{remark}\label{rmk1}
~
    \begin{itemize}
        \item [$(i)$] Observe that since $m>1$ then $n+1<m+n=N$. Thus,
        \begin{equation*}
            1-\frac{2}{N}>1-\frac{2}{n+1}=\min\Big\{1-\frac{2}{m+1}, 1-\frac{2}{n+1}\Big\}.
        \end{equation*}
        So, our existence result holds for values of $p$ and $q$ beyond the standard critical hyperbola $\frac{1}{p}+\frac{1}{q}=1-\frac{2}{N}$. 
        \item [$(ii)$] The lower bound $n>\frac{p+1}{p-1}$ is a consequence of the compact embedding established in Proposition \ref{compact-embedding}. Moreover, in the case when $n\leq\frac{p+1}{p-1}$, Theorem \ref{first-main-result} holds true without any lower bound condition on $\frac{1}{p}+\frac{1}{q}$. We refer to Remark \ref{rmk2} for more details.
    \end{itemize}
\end{remark}
To prove Theorem \ref{first-main-result}, we utilize, on one hand, the critical point theory of Szulkin \cite{szulkin1986minimax} for non-smooth functional (which provides a mountain pass theorem that applies to a large class of problems including those with supercritical nonlinearities) to derive the first component solution $u$. On the other hand, the second component solution $v$ is obtained by exploiting the so-called \textit{pointwise invariance property} established in \cite[Proposition 2.10]{boscaggin2023multiplicity}. Similar ideas were recently used in \cite{cowan2022supercritical,abbas2023existence} to derive the existence of solutions to systems of the form \eqref{a0} in bounded domains. See also \cite{alves2020super} where the same ideas were performed on supercritical Hamiltonian systems with Neumann boundary condition on unbounded domains of the form $\R^m\times B_r$ (where $B_r$ is a ball centered at the origin with radius $r$ in $\R^n$) to build solutions.  
%See \cite{moameni2017variational} for early use of this abstract result. 

Our next results deal with symmetry breaking and multiplicity of solutions of \eqref{e-1} obtained in Theorem \ref{first-main-result} when $a(x)=b(x)=1$, that is, a system of the form
\begin{equation}\label{e-2}
\left\{
\begin{aligned}
-\Delta u+u&=v^{p-1}~~~~\text{in}~~A_R\\
-\Delta v+v&=u^{q-1}~~~~\text{in}~~A_R\\
u, v&>0~~~~~~~~\text{in}~~A_R\\
u=v&=0~~~~~~~~\text{on}~~\partial A_R,
\end{aligned}
\right.
\end{equation}
Before stating our results, we first introduce the following optimal constant in Hardy inequality on $A_R$
\begin{equation}
    \Lambda_H:=\Lambda_H(A_R)=\inf_{0\neq\phi\in H^1_0(A_R)}\frac{\int_{A_R}|\nabla\phi|^2\ dx}{\int_{A_R}\frac{\phi^2}{|x|^2}\ dx}.
\end{equation}
The next result asserts that symmetry breaking occurs.
\begin{thm}\label{second-main-result}
    Let $3< N=m+n$ with $1< n\leq m$. Let $R>0$ and $q\geq p>2$. Suppose that $(u, v)$ is the solution of \eqref{e-2} obtained in Theorem \ref{first-main-result} which is invariant under the group action $O(m)\times O(n)$. If
    \begin{equation}\label{condition-for-symmetry-breaking}
        (p-1)(q-1)>\Big(1+\frac{2N}{\Lambda_H}\Big)^2\Big(\frac{q}{p}\Big),
    \end{equation}
    then $(u, v)$ is non-radial.
\end{thm}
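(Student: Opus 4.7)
I plan to argue by contradiction, assuming that the solution $(u,v)$ obtained in Theorem~\ref{first-main-result} is radial, and then to produce a non-radial admissible perturbation within the cone $K$ along which the reduced Nehari energy strictly decreases. This contradicts the minimax characterization of $(u, v)$ from Theorem~\ref{first-main-result}, forcing $(u, v)$ to be non-radial.

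Suppose $u = U(|x|)$, $v = V(|x|)$. Linearizing \eqref{e-2} at $(U, V)$ yields the symmetric Hamiltonian system
\begin{align*}
-\Delta \phi + \phi &= (p-1)V^{p-2}\psi,\\
-\Delta \psi + \psi &= (q-1)U^{q-2}\phi.
\end{align*}
I would look for a descent direction with separated structure $(\phi, \psi) = (\alpha(r)Y(\theta), \beta(r)Y(\theta))$, where $Y$ is a non-constant $O(m)\times O(n)$-invariant angular profile with $Y'(\theta)\leq 0$ on $[0, \pi/2]$; this compatibility ensures that the perturbation is tangent to the angle-monotonicity cone $K$ at the radial point $(U, V)$. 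The natural choice of radial profiles is $(\alpha, \beta) = (U'(r), V'(r))$, which exactly solves the reduced 1-D system on the first non-trivial angular mode, with an additional centrifugal Hardy term $\Lambda_Y/r^2$ coming from the reduced angular Laplacian.

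Because of the Hopf lemma, $U'(R), V'(R)\neq 0$, so $(U' Y, V' Y)$ fails to lie in $H^1_0(A_R)$. I would bypass this by inserting a radial cutoff $\eta(r)$ with $\eta(R) = 0$ and $\eta \equiv 1$ away from $\partial A_R$, and test with $(\tilde\phi, \tilde\psi) := (\eta(r)U'(r)Y(\theta), \eta(r)V'(r)Y(\theta))$. Feeding $(\tilde\phi, \tilde\psi)$ into the second variation of the reduced functional $I$---where $v$ is eliminated via $v = (-\Delta + 1)^{-1}u^{q-1}$, a reduction that produces the $q/p$ weight in the Hamiltonian energy balance---gives a quadratic form in which the coupling $(p-1)(q-1)\int U^{q-2}V^{p-2}(\cdot)^2$ competes against a centrifugal-plus-cutoff error controlled by $C\int \tilde\phi^2/|x|^2 + C\int \tilde\psi^2/|x|^2$. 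Absorbing these errors into the kinetic part via the Hardy inequality $\int |\nabla \tilde\phi|^2 \geq \Lambda_H \int \tilde\phi^2/|x|^2$ produces precisely the factor $(1 + 2N/\Lambda_H)^2$, so the second variation is strictly negative whenever $(p-1)(q-1) > (1 + 2N/\Lambda_H)^2(q/p)$.

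Such a descent direction contradicts the minimax nature of $(u, v)$ on $K$, so $(u, v)$ cannot be radial. The two main obstacles are (i) verifying that $(\tilde\phi, \tilde\psi)$ is a legitimate tangent direction in $K$ at the radial base point, which requires a careful choice of $Y$ together with a sign analysis of $U', V'$ to preserve $\theta$-monotonicity after perturbation; and (ii) keeping the Hopf/cutoff boundary estimates sharp enough to recover the explicit constant $(1 + 2N/\Lambda_H)^2(q/p)$, in particular tracking the $p \neq q$ asymmetry in the dual Hamiltonian formulation rather than losing it to a rough $\max(p,q)$ bound.
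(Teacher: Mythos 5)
Your overall strategy (contradiction via a non-radial descent direction in the second variation, invoking the minimax/mountain-pass characterization of the critical value, and closing with a Hardy-type estimate) is aligned with the paper's, but several of the key ingredients you propose are wrong or missing, and the gaps are substantive.

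First, the paper does not use $(U'(r),V'(r))$ (with a cutoff) as the radial profile of the perturbation. Instead it introduces an auxiliary weighted radial eigenvalue problem
\begin{equation*}
-\phi''-\tfrac{N-1}{r}\phi'+\tfrac{2N}{r^2}\phi+\phi=\lambda_1\, v^{\frac{p-2}{2}}u^{\frac{q-2}{2}}\phi,\quad\phi(R)=0,
\end{equation*}
takes $\phi$ to be the first eigenfunction (so $\phi\in H^1_0$ from the start, no Hopf/cutoff issues), and multiplies by the explicit first non-constant $O(m)\times O(n)$-invariant angular eigenfunction $\eta(\theta)=\tfrac{m-n}{N}-\cos 2\theta$, whose eigenvalue is $\mu_1=2N$. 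The resulting $w=\phi(r)\eta(\theta)$ satisfies the clean eigen-equation $-\Delta w+w=\lambda_1 v^{\frac{p-2}{2}}u^{\frac{q-2}{2}}w$ exactly. This is precisely what makes the second-variation computation closed-form: $I''(u)(w,w)=\big((p'-1)\lambda_1^2-(q-1)\big)\int u^{q-2}w^2$. With your ansatz the boundary cutoff errors do not vanish and you would have to produce a delicate quantitative estimate at $\partial A_R$; you acknowledge this but do not resolve it, and there is no evident reason the sharp constant would survive that process.

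Second, you misattribute the origin of the factor $(1+2N/\Lambda_H)$. It does not come from absorbing cutoff errors; it comes from the angular eigenvalue $\mu_1=2N$: the eigenvalue $\lambda_1$ above is a Rayleigh quotient with a centrifugal term $2N\int\phi^2/|x|^2$ in the numerator, and Hardy's inequality converts that term into $\tfrac{2N}{\Lambda_H}\int|\nabla\phi|^2$. Third, and most importantly, you omit the key lemma that produces the $\sqrt{q/p}$ factor. The paper proves the pointwise comparison $q\,v(x)^p\ge p\,u(x)^q$ on $A_R$ (a Souplet-type argument, Theorem \ref{t}), and then plugs $\phi=v$ into the Rayleigh quotient to deduce
\begin{equation*}
\inf_{0\ne\phi\in H^1_0(A_R)}\frac{\int|\nabla\phi|^2+\int\phi^2}{\int\phi^2 v^{\frac{p-2}{2}}u^{\frac{q-2}{2}}}\le\sqrt{\tfrac{q}{p}}.
\end{equation*}
Without this ingredient the asymmetry $q/p$ in the threshold is not explained, and the argument does not close. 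A vague appeal to "the dual Hamiltonian formulation producing the $q/p$ weight" is not a proof of this inequality. Finally, note the paper perturbs only the scalar reduced functional $I$ (in $u$), not a genuine two-component test pair $(\tilde\phi,\tilde\psi)$; the $v$-side information enters only through the weight $v^{(p-2)/2}u^{(q-2)/2}$, so your system linearization is more structure than is actually used or needed.
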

Our result on multiplicity reads as follows.
\begin{thm}\label{third-main-result}
    For every $2\leq k\leq\Big\lfloor\frac{N}{2}\Big\rfloor$ and $q\geq p>2$, the problem \eqref{e-2} has $k-1$ positive distinct non-radial solutions if 
     \begin{equation}
        (p-1)(q-1)>\Big(1+\frac{2N}{\Lambda_H}\Big)^2\Big(\frac{q}{p}\Big),
    \end{equation}
    and either of the following two conditions holds:
    \begin{itemize}
        \item [$(i)$] $k>\frac{p+1}{p-1}$~and 
        \begin{equation*}
            \frac{1}{p}+\frac{1}{q}>1-\frac{2}{k+1}
        \end{equation*}
        or;
        \item [$(ii)$] $k\leq\frac{p+1}{p-1}$ and no lower bound condition imposed on $\frac{1}{p}+\frac{1}{q}$.
    \end{itemize}
\end{thm}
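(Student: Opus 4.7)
The plan is to produce the $k-1$ non-radial solutions by varying the splitting $\R^N=\R^m\times\R^n$ used in Theorem \ref{first-main-result}. More precisely, for each integer $j\in\{2,3,\dots,k\}$ I would set $m_j=N-j$ and $n_j=j$; since $j\leq k\leq\lfloor N/2\rfloor$, we have $1<n_j\leq m_j$, so the geometric hypotheses of Theorem \ref{first-main-result} are met. Applying Theorem \ref{first-main-result} (with weights $a\equiv b\equiv 1$, which trivially satisfy $(\cH)$) yields a positive weak solution $(u_j,v_j)$ of \eqref{e-2} that is invariant under the group action $O(m_j)\times O(n_j)=O(N-j)\times O(j)$.

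To justify this application across all $j\in\{2,\dots,k\}$, I would check the subcriticality-with-respect-to-$n_j$ condition. Under case $(i)$ of the statement, we have $k>\frac{p+1}{p-1}$ and $\frac{1}{p}+\frac{1}{q}>1-\frac{2}{k+1}$. For each $j\leq k$, split into two subcases: if $n_j=j>\frac{p+1}{p-1}$, then monotonicity of $j\mapsto 1-\frac{2}{j+1}$ gives $\frac{1}{p}+\frac{1}{q}>1-\frac{2}{k+1}\geq 1-\frac{2}{j+1}$, so Theorem \ref{first-main-result} applies directly; if instead $n_j\leq\frac{p+1}{p-1}$, then Remark \ref{rmk1}$(ii)$ allows me to invoke Theorem \ref{first-main-result} without any lower bound on $\frac{1}{p}+\frac{1}{q}$. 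Case $(ii)$ is handled analogously using only Remark \ref{rmk1}$(ii)$ throughout. Next, since the inequality $(p-1)(q-1)>\bigl(1+\frac{2N}{\Lambda_H}\bigr)^{2}(q/p)$ is assumed, Theorem \ref{second-main-result} applies to each $(u_j,v_j)$ and yields that every $(u_j,v_j)$ is non-radial.

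It remains to show that the $k-1$ pairs $(u_2,v_2),\dots,(u_k,v_k)$ are pairwise distinct. Here I would argue by contradiction: suppose $(u_j,v_j)=(u_{j'},v_{j'})$ for some $2\leq j<j'\leq k$. Then the common solution would be invariant under both $O(N-j)\times O(j)$ and $O(N-j')\times O(j')$, and hence under the closed subgroup $G\subseteq O(N)$ they generate. A standard transitivity argument shows that this $G$ acts transitively on each sphere $\partial B_r$: given two points of equal norm, one can align their last $j$ coordinates using $O(N-j)\times O(j)$ and then redistribute the mass into the last $j'$ coordinates using $O(N-j')\times O(j')$, iterating finitely many times to reach an arbitrary point on the sphere (this is essentially the observation that the only proper closed subgroups of $O(N)$ acting transitively on spheres do not contain two such distinct product subgroups with $2\leq j<j'\leq\lfloor N/2\rfloor$). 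Consequently $(u_j,v_j)$ would be radial, contradicting the conclusion of Theorem \ref{second-main-result}.

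The main obstacle will be this distinctness step: one has to verify carefully that simultaneous invariance under $O(N-j)\times O(j)$ and $O(N-j')\times O(j')$ forces radial symmetry whenever $2\leq j<j'\leq\lfloor N/2\rfloor$, which is really the group-theoretic heart of the multiplicity argument. Once distinctness is established, the proof is complete since $j$ ranges over the $k-1$ values $2,3,\dots,k$ and each yields a non-radial solution with a different maximal symmetry group, giving exactly $k-1$ geometrically distinct positive non-radial solutions of \eqref{e-2}.
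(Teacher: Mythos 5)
Your proposal follows the same structure as the paper: vary the splitting $\R^N=\R^{N-j}\times\R^{j}$ over $j\in\{2,\dots,k\}$, invoke Theorem \ref{first-main-result} for existence (using the monotonicity of $j\mapsto 1-\tfrac{2}{j+1}$ exactly as you do), invoke Theorem \ref{second-main-result} for non-radiality, and then show the $k-1$ solutions are pairwise distinct. The only place you depart from the paper is the distinctness step: the paper isolates this as Lemma \ref{distinct-solutions} and proves it by a direct coordinate substitution (assuming $(u_{m,n},v_{m,n})=(u_{m',n'},v_{m',n'})$ and evaluating at points where only two coordinates are nonzero, one sitting in the ``angular'' block of one decomposition and the ``radial'' block of the other, thereby forcing both to be radial), whereas you phrase it as a group-theoretic transitivity claim for the group generated by $O(N-j)\times O(j)$ and $O(N-j')\times O(j')$. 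These two routes are mathematically the same observation expressed in different language; the paper's coordinate form is the more elementary and self-contained of the two.

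One caution about your sketch of the transitivity step: the phrase ``align their last $j$ coordinates using $O(N-j)\times O(j)$'' is not quite right as stated, since that group cannot change the partial norm $\sqrt{x_{N-j+1}^2+\cdots+x_N^2}$, so two points of equal total norm cannot in general have their last $j$ coordinates matched by a $G_1$-motion alone. The correct mechanism is the one the paper's Lemma \ref{distinct-solutions} exploits: because $j<j'$, the coordinate range $\{N-j'+1,\dots,N-j\}$ is nonempty and sits in the first block of $G_1=O(N-j)\times O(j)$ but in the second block of $G_2=O(N-j')\times O(j')$. Using $G_1$ you may collapse a point to the form $(s,0,\dots,0,t,0,\dots,0)$ with $t$ in slot $N-j+1$; then $G_2$ lets you slide that $t$ into a slot $\leq N-j$; then one more application of $G_1$ collapses everything to a single coordinate. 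No iteration is needed. With that repair your distinctness argument is exactly Lemma \ref{distinct-solutions}, and the proof is complete and correct.
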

~\\
The paper is organized as follows. In Section \ref{section:preliminary} we recall some well-known properties of critical point theory due to Szulkin as well as Agmon-Douglas-Nirenberg's regularity result on smooth unbounded domains with bounded boundary. Section \ref{section:proof of first result} is devoted to the proof of Theorem \ref{first-main-result}. Finally, in Section \ref{section:non-radial solutions}, we are concerned with the proof of Theorems \ref{second-main-result} and \ref{third-main-result}. \\\\
\textbf{Notation:} Throughout the paper, $\lfloor x\rfloor$ denotes the floor of $x$. It is the greatest integer less than or equal to $x$.

\section{Preliminary}\label{section:preliminary}

In this section, we collect some preliminaries that will be useful throughout the paper. We start by recalling some well-known properties from critical point theory. Let $V$ be a real Banach space, $\Phi\in C^1(V,\R)$, and $\Psi: V\to(-\infty,+\infty]$ be a proper, convex and lower-semicontinuous function. Consider the functional $I: V\to(-\infty,+\infty]$ defined by
\begin{equation}\label{functional}
    I:=\Psi-\Phi.
\end{equation}
The following definition of critical point is due to Szulkin \cite{szulkin1986minimax}.
\begin{defi}\label{critical-point-szulking}
    A point $\overline{u}\in V$ is said to be a critical point of $I$ if $I(\overline{u})<\infty$ and if it satisfies the following inequality
    \begin{equation}
        \langle D\Phi(\overline{u}),\overline{u}-v\rangle+\Psi(v)-\Psi(\overline{u})\geq0,~~~\forall v\in V.
    \end{equation}
\end{defi}
We also recall the following notion of Palais-Smale compactness condition introduced in \cite{szulkin1986minimax}.
\begin{defi}\label{palais-smale-compactness-condition}
    We say that $I$ satisfies the Palais-Smale compactness condition (PS) if every sequence $\{u_i\}$ such that
    \begin{itemize}
        \item [$(i)$] $I(u_i)\to c\in \R$

        \item [$(ii)$] $\langle D\Phi(u_i), u_i-v\rangle+\Psi(v)-\Psi(u_i)\geq-\varepsilon_i\|u_i-v\|_V,~~~~\forall v\in V$
    \end{itemize}
    where $\varepsilon_i\to 0$, has a convergent subsequence.
\end{defi}
We shall also need a mountain pass theorem which is due to Szulkin \cite{szulkin1986minimax}. 
\begin{thm}[Mountain Pass Theorem]\label{mountain-pass-theorem}
    Let $I: V\to(-\infty,+\infty]$ be a functional of the form \eqref{functional} satisfying the Palais-Smale compactness condition and mountain pass geometry (MPG):
    \begin{itemize}
        \item [$(i)$] $I(0)=0$;

        \item [$(ii)$] There exists $e\in V$ such that $I(e)\leq0$;

        \item [$(iii)$] There exists $\rho$ such that $0<\rho<\|e\|$ and for every $u\in V$ with $\|u\|=\rho$ one has $I(u)>0$.
    \end{itemize}
    Then $I$ has a critical value $c$ which is characterized by
    \begin{equation}
        c:=\inf_{\gamma\in \Gamma}\sup_{t\in[0,1]}I(\gamma(t))
    \end{equation}
    where $\Gamma=\{\gamma\in C(V,\R): \gamma(0)=0,~\gamma(1)=e\}$
\end{thm}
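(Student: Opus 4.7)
The plan is to follow the classical minimax scheme, adapted to Szulkin's non-smooth framework $I=\Psi-\Phi$ with $\Phi\in C^1$ and $\Psi$ proper, convex, lsc. The argument has four main steps: (a) verify $c$ is finite and strictly positive, (b) produce an almost-minimizing path via Ekeland's variational principle on the path space, (c) use a non-smooth deformation/pseudo-gradient argument along that path to extract a Palais--Smale sequence at level $c$ in the sense of Definition \ref{palais-smale-compactness-condition}, and (d) invoke the (PS) condition and pass to the limit to obtain an actual Szulkin critical point at level $c$.

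First I would check that $0<c<\infty$. From (ii) any path $\gamma\in\Gamma$ is continuous with $\gamma(0)=0$ and $\|\gamma(1)\|=\|e\|>\rho$, so by the intermediate value theorem applied to $t\mapsto\|\gamma(t)\|$ there exists $t_\gamma\in(0,1)$ with $\|\gamma(t_\gamma)\|=\rho$; hypothesis (iii) then gives $\sup_t I(\gamma(t))\ge\inf_{\|u\|=\rho}I(u)>0$, whence $c>0$. Finiteness follows from the straight-line path $\gamma_0(t)=te$: convexity of $\Psi$ and (i) give $\Psi(te)\le(1-t)\Psi(0)+t\Psi(e)=(1-t)\Phi(0)+t\Psi(e)<\infty$, and continuity of $\Phi$ makes $\sup_t I(\gamma_0(t))$ finite.

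Next, endow $\Gamma$ with the sup metric $d(\gamma_1,\gamma_2)=\max_{t\in[0,1]}\|\gamma_1(t)-\gamma_2(t)\|$, making it a complete metric space. The functional $J(\gamma):=\max_{t\in[0,1]}I(\gamma(t))$ is lower semicontinuous on $\Gamma$ (from lsc of $\Psi$ and continuity of $\Phi$) and bounded below by $c$. Applying Ekeland's variational principle to $J$ with level $\varepsilon_i\to 0$ produces paths $\gamma_i\in\Gamma$ with $J(\gamma_i)\le c+\varepsilon_i$ and
\begin{equation*}
    J(\gamma)\ge J(\gamma_i)-\varepsilon_i\,d(\gamma,\gamma_i)\qquad\text{for all }\gamma\in\Gamma.
\end{equation*}
The heart of the proof is to translate this path-level almost-minimality into a point-level Palais--Smale condition. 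Letting $M_i:=\{t\in[0,1]:I(\gamma_i(t))=J(\gamma_i)\}$ (the peak set, nonempty by compactness of $[0,1]$ and lsc/usc properties of $I$ along continuous paths), one argues that there exists $t_i\in M_i$ and $u_i:=\gamma_i(t_i)$ satisfying
\begin{equation*}
    \langle D\Phi(u_i),u_i-v\rangle+\Psi(v)-\Psi(u_i)\ge -\varepsilon_i\|u_i-v\|_V\qquad\forall v\in V.
\end{equation*}
The standard way to obtain this is by contradiction: if at every peak point there were a direction $v$ violating the above, one constructs, using a locally finite partition of unity around the peak set and the convexity of $\Psi$, a small perturbation $\tilde\gamma$ of $\gamma_i$ (of the form $\tilde\gamma(t)=\gamma_i(t)+\lambda(t)(v(t)-\gamma_i(t))$ with $\lambda\in C([0,1],[0,1])$ vanishing at the endpoints) that belongs to $\Gamma$ and strictly decreases $J$ by more than $\varepsilon_i\,d(\tilde\gamma,\gamma_i)$, contradicting the Ekeland estimate. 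This pseudo-gradient/deformation construction for non-smooth convex $\Psi$ is the main technical obstacle; it relies on the fact that for fixed $v$, the map $u\mapsto\langle D\Phi(u),u-v\rangle+\Psi(v)-\Psi(u)$ is upper semicontinuous and on the convexity inequality $\Psi(\gamma_i(t)+\lambda(v-\gamma_i(t)))\le(1-\lambda)\Psi(\gamma_i(t))+\lambda\Psi(v)$ to control $J(\tilde\gamma)$.

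Finally, with the Palais--Smale sequence $\{u_i\}$ satisfying $I(u_i)\to c$ and the above approximate inequality, Definition \ref{palais-smale-compactness-condition} furnishes a subsequence $u_i\to\bar u$ in $V$. Since $\Phi\in C^1$, $D\Phi(u_i)\to D\Phi(\bar u)$ in $V^*$; using lower semicontinuity of $\Psi$ together with $\Psi(u_i)=I(u_i)+\Phi(u_i)\to c+\Phi(\bar u)$, one gets $\Psi(\bar u)=c+\Phi(\bar u)$ so $I(\bar u)=c$. Passing to the limit in the approximate inequality gives
\begin{equation*}
    \langle D\Phi(\bar u),\bar u-v\rangle+\Psi(v)-\Psi(\bar u)\ge 0\qquad\forall v\in V,
\end{equation*}
so $\bar u$ is a critical point of $I$ in the sense of Definition \ref{critical-point-szulking} with $I(\bar u)=c$, as claimed.
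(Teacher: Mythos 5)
The paper itself gives no proof of this statement: Theorem \ref{mountain-pass-theorem} is imported verbatim from Szulkin \cite{szulkin1986minimax}, so there is no internal argument to compare against. Your strategy (Ekeland's variational principle on the path space $\Gamma$ with the sup metric, a convexity-based perturbation of almost-optimal paths to produce a Palais--Smale sequence in the sense of Definition \ref{palais-smale-compactness-condition}, then the (PS) condition to pass to the limit) is exactly the route of the cited source, so as an outline it is the ``right'' proof. However, two points in your write-up are genuine gaps rather than routine omissions. First, the claim $c>0$ does not follow from hypothesis $(iii)$ as stated: pointwise positivity of $I$ on the sphere $\|u\|=\rho$ does not give $\inf_{\|u\|=\rho}I(u)>0$, which is what your intermediate-value argument actually uses (Szulkin assumes $I\geq\alpha>0$ on the sphere precisely for this reason). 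This is not cosmetic: in your deformation step you need the peak set $M_i$ to stay away from $t=0,1$ so that the perturbed path keeps the endpoints $0$ and $e$ and remains in $\Gamma$, and that requires $c>\max\{I(0),I(e)\}=0$. Either strengthen $(iii)$ to a uniform bound, or add the argument handling the degenerate case $c=0$.

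Second, the heart of the proof --- converting Ekeland's inequality $J(\gamma)\geq J(\gamma_i)-\varepsilon_i d(\gamma,\gamma_i)$ into the pointwise inequality
\begin{equation*}
\langle D\Phi(u_i),u_i-v\rangle+\Psi(v)-\Psi(u_i)\geq-\varepsilon_i\|u_i-v\|_V \quad \forall v\in V
\end{equation*}
at some peak point $u_i=\gamma_i(t_i)$ --- is asserted by a contradiction-plus-partition-of-unity sketch, but the quantitative estimate is never carried out: you must show that if the inequality fails (with a uniform negative margin) at \emph{every} peak point, then the comparison path $\tilde\gamma(t)=\gamma_i(t)+\lambda(t)(v(t)-\gamma_i(t))$ satisfies $J(\tilde\gamma)<J(\gamma_i)-\varepsilon_i d(\tilde\gamma,\gamma_i)$, which needs a uniform-margin/compactness argument on $M_i$, the upper semicontinuity you mention, the convexity estimate for $\Psi$, and a first-order expansion of $\Phi$ that is uniform along the path. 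As written this is a plan, not a proof, and it is precisely the step where Szulkin's paper does its technical work. Finally, a smaller repair: in the limit passage, lower semicontinuity of $\Psi$ gives only $\Psi(\bar u)\leq\liminf\Psi(u_i)$; to get $I(\bar u)=c$ and to pass to the limit in the approximate inequality you should also substitute $v=\bar u$ in $(ii)$ of Definition \ref{palais-smale-compactness-condition} to obtain $\limsup\Psi(u_i)\leq\Psi(\bar u)$, hence $\Psi(u_i)\to\Psi(\bar u)$.
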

The next result arising from convex analysis can be found in \cite{abbas2023existence}. We report its proof here for the sake of completeness.
\begin{lemma}\label{convex-analysis-result}
    Let $V$ be a reflexive Banach space and let $f: V\to\R$ be a convex and differentiable functional. If
    \begin{equation}\label{convex0}
        f(u)-f(\overline{u})\geq \langle Df(u), u-\overline{u}\rangle,
    \end{equation}
    then $Df(u)=Df(\overline{u})$, where $\langle\cdot, \cdot\rangle$ is the duality pairing between $V$ and $V^*$. In particular, if $f$ is strictly convex, then $u=\overline{u}$.
\end{lemma}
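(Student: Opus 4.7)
The plan is to combine the hypothesis with the standard subgradient inequality from convexity of $f$. For any convex differentiable $f$, the subgradient inequality $f(v) \geq f(u) + \langle Df(u), v - u\rangle$ holds for every $v \in V$ (obtained by evaluating the convexity inequality at $u + t(v-u)$, dividing by $t$, and letting $t \to 0^+$). Choosing $v = \overline{u}$ yields $f(u) - f(\overline{u}) \leq \langle Df(u), u - \overline{u}\rangle$, which together with the hypothesis \eqref{convex0} forces the equality
\[
f(u) - f(\overline{u}) = \langle Df(u), u - \overline{u}\rangle.
\]

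Next I would show that $Df(u)$ is itself a subgradient of $f$ at $\overline{u}$. For any $v \in V$, the subgradient inequality at $u$ gives
\[
f(v) \geq f(u) + \langle Df(u), v - u\rangle = f(u) + \langle Df(u), v - \overline{u}\rangle + \langle Df(u), \overline{u} - u\rangle,
\]
and substituting $\langle Df(u), \overline{u} - u\rangle = f(\overline{u}) - f(u)$ from the displayed equality yields
\[
f(v) \geq f(\overline{u}) + \langle Df(u), v - \overline{u}\rangle \qquad \text{for all } v \in V.
\]
Thus $Df(u)$ lies in the subdifferential $\partial f(\overline{u})$. Since $f$ is differentiable at $\overline{u}$, this subdifferential is the singleton $\{Df(\overline{u})\}$, so $Df(u) = Df(\overline{u})$.

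For the strict convexity statement, the displayed equality above says that the affine minorant $L(v) := f(u) + \langle Df(u), v - u\rangle$ agrees with $f$ at both $u$ and $\overline{u}$. Since $L \leq f$ globally by the subgradient inequality, the convex function $f - L \geq 0$ vanishes at both endpoints of the segment $[u, \overline{u}]$, and convexity then forces $f \equiv L$ on that whole segment. This means $f$ is affine on $[u, \overline{u}]$, which contradicts strict convexity unless $u = \overline{u}$.

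I do not expect any substantial obstacle here: the argument is a routine application of the geometric content of the subgradient inequality (namely, that an affine minorant touching the graph of a convex differentiable function at a point must have slope $Df$ at that point), and the reflexivity hypothesis on $V$ is not actually used in the proof.
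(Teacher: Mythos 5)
Your proof is correct and takes essentially the same approach as the paper: both first upgrade \eqref{convex0} to the equality $f(u)-f(\overline u)=\langle Df(u),u-\overline u\rangle$ via the standard gradient inequality, and then conclude $Df(u)=Df(\overline u)$. The paper packages this by introducing the auxiliary convex functional $\phi(w)=f(w)-\langle Df(u),w\rangle$ and noting that both $u$ and $\overline u$ minimize it (so $D\phi(\overline u)=0$), while you phrase the identical fact as the subdifferential inclusion $Df(u)\in\partial f(\overline u)=\{Df(\overline u)\}$; the strict-convexity endgame is likewise the same observation expressed in two equivalent ways (uniqueness of the minimizer of $\phi$ vs.\ no affine behaviour on a nondegenerate segment).
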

\begin{proof}
    Since $f$ is convex, then 
    \begin{equation}\label{convex1}
        f(\overline{u})-f(u)\geq\langle Df(u), \overline{u}-u\rangle.
    \end{equation}
     From \eqref{convex1} and \eqref{convex0}, it follows that
     \begin{equation}\label{in00}
         f(u)-f(\overline{u})=\langle Df(u), u-\overline{u}\rangle.
     \end{equation}
     For all $w\in V$, we also have
     \begin{equation*}
         f(w)-f(u)\geq\langle Df(u), w-u\rangle~~\text{that is,}~~f(w)\geq f(u)+\langle Df(u), w\rangle-\langle Df(u), u\rangle.
     \end{equation*}
     So,
     \begin{equation}\label{in0}
         f(w)-\langle Df(u), w\rangle\geq f(u)-\langle Df(u), u\rangle.
     \end{equation}
     We now define
     \begin{equation*}
         \phi(w)=f(w)-\langle Df(u), w\rangle.
     \end{equation*}
     Notice that $\phi$ is convex since does $f$. On the other hand, the inequality \eqref{in0} becomes
     \begin{equation}\label{in01}
         \phi(w)\geq\phi(u),~~~\forall w\in V.
     \end{equation}
     In particular, $u$ is a minimum of $\phi$ in $V$.
     Next, using \eqref{in00}, we have
     \begin{equation}\label{in02}
         \phi(\overline{u})=f(\overline{u})-\langle Df(u),\overline{u}\rangle=f(u)-\langle Df(u),u\rangle=\phi(u).
     \end{equation}
     From \eqref{in02} and \eqref{in01} we also deduce that $\phi$ attains its minimum at $w=\overline{u}$. In particular,
     \begin{equation*}
         0=D\phi(\overline{u})=Df(\overline{u})-Df(u),~~\text{that is,}~~Df(u)=Df(\overline{u}). 
     \end{equation*} 
     
     Now, if $f$ is strictly convex, then $\phi$ is also strictly convex. Consequently, $\phi$ has at most one minimum. Thus, $u$ must be equal to $\overline{u}$, that is, $u=\overline{u}$, as wanted. 
\end{proof}
We conclude this section with the following existence and regularity result on smooth unbounded domain with bounded boundary. This result is due to Agmon-Douglis-Nirenberg, see e.g., \cite[Theorem 9.32]{brezis2010functional}. 

\begin{lemma}\label{agmon-douglis-nirenberg-result}
    Let $t\in (1,\infty)$ and $f\in L^t(A_R)$. Then there exists a unique solution $u\in W^{2,t}(A_R)\cap W^{1,t}_0(A_R)$ of the problem
    \begin{equation}
        \left\{
        \begin{aligned}
            -\Delta u+u&=f~~~\text{in}~A_R\\
            u&=0~~~\text{on}~\partial A_R.
        \end{aligned}
        \right.
    \end{equation}
    Moreover,
    \begin{equation}
        \|u\|_{W^{2,t}(A_R)}\leq C\|f\|_{L^t(A_R)}
    \end{equation}
    where $C$ is a positive constant depending on $t$ and $A_R$.
\end{lemma}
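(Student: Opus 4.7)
The plan is to establish existence, uniqueness and the $W^{2,t}$ estimate by combining the classical Calder\'on--Zygmund / Agmon--Douglis--Nirenberg theory on smooth bounded domains with an exhaustion of $A_R$ by bounded annuli. The structural observation that makes the argument work on an unbounded domain is that the coercive zero-order term $+u$ renders $-\Delta+I$ invertible without any Poincar\'e-type inequality, so no difficulty arises from $|A_R|=\infty$ or from the lack of a spectral gap.

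Uniqueness I would handle directly: if $u_1, u_2$ both solve the problem, then $w=u_1-u_2\in W^{2,t}(A_R)\cap W^{1,t}_0(A_R)$ satisfies $-\Delta w+w=0$ with zero trace. Testing against $\eta_{R_0}^2 |w|^{t-2}w$ for a radial cutoff $\eta_{R_0}$ supported in $\{|x|\le 2R_0\}$ and equal to $1$ on $\{|x|\le R_0\}$, integrating by parts, and sending $R_0\to\infty$ (the cutoff remainder vanishes because $|\nabla w||w|^{t-1}\in L^1(A_R)$ by H\"older), one is left with $\int_{A_R}|w|^t\,dx\le 0$, so $w\equiv 0$. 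For $1<t<2$ a standard regularization $(|w|^2+\eps)^{(t-2)/2}w$ is used and then $\eps\to 0$.

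For existence and the estimate, I would exhaust $A_R$ by the smooth bounded annuli $\Omega_k=\{R<|x|<R+k\}$ and, using the ADN theorem on bounded smooth domains, solve $-\Delta u_k+u_k=f$ in $\Omega_k$ with zero Dirichlet data. Multiplying the equation by $|u_k|^{t-2}u_k$ yields the $k$-independent $L^t$ bound $\|u_k\|_{L^t(\Omega_k)}\le \|f\|_{L^t(A_R)}$. Combined with the ADN inequality
\begin{equation*}
\|u_k\|_{W^{2,t}(\Omega_k)}\le C\bigl(\|f\|_{L^t(\Omega_k)}+\|u_k\|_{L^t(\Omega_k)}\bigr)
\end{equation*}
this gives a uniform $W^{2,t}$ bound. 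A subsequence of $\{u_k\}$, extended by zero to $A_R$, then converges weakly in $W^{2,t}(A_R)$ to some $u$, which solves the limit problem, and the estimate survives by weak lower semicontinuity of the norm.

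The main obstacle is showing that the ADN constant $C$ above is independent of $k$ on the growing annuli. This holds because the inner boundary $\partial B_R$ is fixed and smooth, while near the outer boundary $\partial B_{R+k}$ (whose curvature only improves with $k$) and in the interior, Calder\'on--Zygmund estimates apply on a covering by balls of fixed radius with constants depending only on $N$ and $t$; these local estimates patch to a global one via a standard partition-of-unity argument. A cleaner alternative, which I might prefer in a write-up, is to bypass the exhaustion entirely: solve the whole-space problem $-\Delta U+U=F$ on $\R^N$ using the Mikhlin multiplier theorem applied to the symbols $\xi_i\xi_j/(1+|\xi|^2)$ (giving $\|U\|_{W^{2,t}(\R^N)}\le C\|F\|_{L^t(\R^N)}$), then use a smooth cutoff to reduce the original problem to the whole-space case plus a local ADN boundary-regularity problem in a bounded neighborhood of $\partial A_R$, which is the classical setting.
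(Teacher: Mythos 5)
The paper offers no proof of this lemma; it is stated with a citation to \cite[Theorem 9.32]{brezis2010functional} and used as a black box. Your write-up is thus a proof from scratch. Your second route --- solve $-\Delta U+U=F$ on $\R^N$ via the Mikhlin multiplier $\xi_i\xi_j/(1+|\xi|^2)$, then localize near the bounded boundary $\partial A_R$ and invoke classical ADN on a bounded neighborhood --- is essentially the textbook argument behind the cited theorem and is the cleaner one to commit to. The uniqueness step is also correct: $|\nabla w|\,|w|^{t-1}\in L^1(A_R)$ by H\"older since $\nabla w\in L^t$, $|w|^{t-1}\in L^{t'}$, and with $|\nabla\eta_{R_0}|\lesssim R_0^{-1}$ the cutoff remainder vanishes in the limit.

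There is, however, a concrete gap in the exhaustion route as written: you extend $u_k\in W^{2,t}(\Omega_k)\cap W^{1,t}_0(\Omega_k)$ by zero to $A_R$ and assert the extensions lie in $W^{2,t}(A_R)$. They do not. On $\partial B_{R+k}$ one has $u_k=0$ but generically $\partial_\nu u_k\neq0$, so the zero extension has a jump in $\nabla u_k$ across $\partial B_{R+k}$; its distributional Hessian then contains a surface measure and is not in $L^t$. The extension is only in $W^{1,t}_0(A_R)$. The standard repair: for each fixed $k_0$, the restrictions $u_k|_{\Omega_{k_0}}$ (for $k>k_0$) are bounded in $W^{2,t}(\Omega_{k_0})$; a diagonal argument gives a subsequence converging weakly in $W^{2,t}_{\mathrm{loc}}(A_R)$ to a limit $u$ solving the equation with $u=0$ on $\partial B_R$, and the global bound $\|u\|_{W^{2,t}(A_R)}\le C\|f\|_{L^t(A_R)}$ then follows by lower semicontinuity on each $\Omega_{k_0}$ together with $k_0\to\infty$. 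With this repair --- or by simply committing to your multiplier route --- the proof is complete; your concern about $k$-uniformity of the ADN constant is the right thing to worry about, and the local covering argument you sketch is the correct way to resolve it.
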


\section{Proof of Theorem \ref{first-main-result}}\label{section:proof of first result}
The aim of this section is to prove the first main result of this paper, Theorem \ref{first-main-result}. For this, let us start by introducing an equivalent variational formulation of problem \eqref{e-1} via the so-called \textit{reduction-by-inversion} approach.

From the first equation on \eqref{e-1} we have
\begin{align}\label{v}
    v=|-\Delta u+u|^{\frac{1}{p-1}}a(x)^{-\frac{1}{p-1}}=|-\Delta u+u|^{p'-1}a(x)^{-(p'-1)}
\end{align}
where $\frac{1}{p}+\frac{1}{p'}=1$. Now, plugging \eqref{v} into the second equation of \eqref{e-1}, we obtain the following equivalent problem 
\begin{equation}\label{e-1-equiv}
    \left\{
    \begin{aligned}
        -\Delta\Big(|-\Delta u+u|^{p'-1}a(x)^{-(p'-1)}\Big)+|-\Delta u+u|^{p'-1}a(x)^{-(p'-1)}&=b(x)u^{q-1}~~~~\text{in}~A_R\\
        u&>0~~~~~~~~~~~~~~\text{in}~A_R\\
        u=\Delta u&=0~~~~~~~~~~~~~~\text{on}~\partial A_R. 
    \end{aligned}
    \right.
\end{equation}
The Euler-Lagrange functional associated to \eqref{e-1-equiv} is defined as
\begin{align}\label{euler-lagrange-function}
    I(u)&=\frac{1}{p'}\int_{A_R}\frac{|-\Delta u+u|^{p'}}{a(x)^{p'-1}}\ dx-\frac{1}{q}\int_{A_R}b(x)|u|^q\ dx\\
 \nonumber   &=\Psi(u)-\Phi(u)
\end{align}
where
\begin{equation*}
    \Psi(u)=\frac{1}{p'}\int_{A_R}\frac{|-\Delta u+u|^{p'}}{a(x)^{p'-1}}\ dx~~~\text{and}~~~\Phi(u)=\frac{1}{q}\int_{A_R}b(x)|u|^q\ dx. 
\end{equation*}
We consider the group action $G=O(m)\times O(n)$. Here, $O(k)$ represents the orthogonal group in $\R^k$. Define
\begin{equation*}
    W^{2,p'}_{G}(A_R)=\{u\in W^{2,p'}(A_R): gu=u,~\forall g\in G\}
\end{equation*}
where $gu(x)=u(g^{-1}x)$. Let $\cC$ be the cone defined by
\begin{align*}
    \cC&:=\{u\in W^{2,p'}_{G}(A_R): u=u(s,t), u\geq0, su_t-tu_s\leq0~\text{a.e. in}~\widehat{A}_R\}
\end{align*}
where $\widehat{A}_R:=\{(s,t)\in\R^2: s^2+t^2>R^2, s\geq0, t\geq0\}$. Notice that if we write $(s,t)\in\widehat{A}_R$ in polar coordinates as $s=r\cos\theta$ and $t=r\sin\theta$, then one can consider the cone $\cC$ as the set of functions $u=u(r,\theta) \geq0$ satisfying the inequality $u_{\theta}\leq0$ a.e. in $\widetilde{A}_R$, where $\widetilde{A}_R:=\{(r,\theta): r\in(R,\infty),~\theta\in[0,\frac{\pi}{2}]\}$. 

Let $K$ be the convex cone given by
\begin{equation*}
    K=\cC\cap H^1_0(A_R).
\end{equation*}
It is easily seen that $K$ is weakly closed with respect to the $W^{2,p'}(A_R)$-norm. We now introduce the restriction of $I$ on $K$ as
\begin{equation*}
    I_K=\Psi_K-\Phi
\end{equation*}
where
\begin{equation}\label{psi-k}
    \Psi_K(u):=\left\{
    \begin{aligned}
        &\Psi(u),~~~\text{if}~~u\in K\\
        &0~~~~~~~~~\text{otherwise}.
    \end{aligned}
    \right.
\end{equation}
Throughout the rest of the paper, we use $I$ in place of $I_K$ to alleviate the notations.

A crucial ingredient in our proof of Theorem \ref{first-main-result} is given by the following compact embedding. 
\begin{prop}\label{compact-embedding}
    Let $N=m+n$ with $1< n\leq m$. The embedding $K\hookrightarrow L^d(A_R)$ is compact for all $p'< d<\frac{(n+1)p'}{(n+1)-2p'}=\max\{(p')^*_{m+1}, (p')^*_{n+1}\}$ where $(p')^*_{m+1}=\frac{(m+1)p'}{(m+1)-2p'}$ and $(p')^*_{n+1}=\frac{(n+1)p'}{(n+1)-2p'}$.
\end{prop}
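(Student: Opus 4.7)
The plan is to follow the general strategy of \cite[Proposition~2.6]{boscaggin2023multiplicity}, combining reflexivity of $W^{2,p'}$ with the Rellich–Kondrachov theorem on bounded sub-domains and a tightness-at-infinity estimate that exploits the bi-axial symmetry together with the angular monotonicity encoded in $K$. Adapting their argument requires both moving from the $H^1$ setting to $W^{2,p'}$ and upgrading from a single rotational symmetry to the product action $O(m)\times O(n)$.

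First I would take a sequence $\{u_k\}\subset K$ bounded in $W^{2,p'}(A_R)\cap H^1_0(A_R)$. By reflexivity, extract a weakly convergent subsequence $u_k\rightharpoonup u$; since $K$ is weakly closed, $u\in K$. On any bounded annular subdomain $\Omega_{R_1}=A_R\cap B_{R_1}(0)$ the classical Rellich–Kondrachov theorem gives strong convergence $u_k\to u$ in $L^d(\Omega_{R_1})$ for every $d$ below the relevant local Sobolev exponent. What remains is tightness at infinity, namely that for every $\varepsilon>0$ there exists $R_1>R$ such that $\sup_k\int_{A_R\setminus B_{R_1}(0)}|u_k|^d\,dx<\varepsilon$.

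For the tightness step I would reduce to the two-dimensional profile $\widetilde u_k(s,t)$ on $\widehat A_R$, in which the $L^d$ norm on $A_R$ picks up the Jacobian weight $s^{m-1}t^{n-1}$. The monotonicity condition becomes $\partial_\theta\widetilde u_k\le 0$ in polar coordinates $(r,\theta)$, so the mass of $\widetilde u_k$ concentrates on the ray $\theta=0$, i.e.\ near the $\R^m$-axis where $t$ is small. Lions' compactness result \cite{lions1982symetrie} (see also Willem \cite{willem1997minimax}) applied to the $O(m)$-invariance then yields a Strauss-type pointwise decay of the form $|u_k(x)|\lesssim s^{-(m-1)/2}$ outside a tubular neighbourhood of the $\R^n$-axis, and produces an effectively $(n+1)$-dimensional Sobolev embedding whose critical exponent is precisely $(p')^*_{n+1}$. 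The angular monotonicity is then used to handle the complementary region $\{t\gtrsim s\}$ by dominating $\widetilde u_k(r,\theta)$ by its larger values at smaller angles, where the Lions estimate already applies.

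The main obstacle is exactly this last step: producing a pointwise decay strong enough to close the tail integral uniformly in $k$ for every $d$ strictly below $(p')^*_{n+1}$, an exponent strictly larger than the standard Sobolev exponent $(p')^{**}=Np'/(N-2p')$ of $A_R$. This is the reason the hypothesis $n>\frac{p+1}{p-1}$ appears: it is equivalent to $n+1>2p'$, the condition under which the effective $(n+1)$-dimensional Sobolev inequality has a finite target exponent $(p')^*_{n+1}$ and hence delivers a meaningful range $p'<d<(p')^*_{n+1}=\max\{(p')^*_{m+1},(p')^*_{n+1}\}$.
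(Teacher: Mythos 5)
Your proposal identifies the right \emph{ingredients} (the effective $(n+1)$-dimensional Sobolev exponent, Lions' compactness, the role of $n>\frac{p+1}{p-1}$ as $(n+1)-2p'>0$), but the architecture differs from the paper's and, more importantly, the step you yourself flag as ``the main obstacle'' is in fact where the whole content of the proof lies, and it is not filled in.

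The paper's proof has three clean pieces: (a) a \emph{continuous} embedding $K\hookrightarrow L^{(p')^*_{n+1}}(A_R)$ obtained by splitting the angular integral at $\theta=\pi/3$, using the monotonicity $u_\theta\le0$ to fold the wedge $\theta\in[\pi/3,\pi/2]$ onto $\theta\in[0,\pi/3]$, and then substituting $v(s,y)=s^{(m-1)/d}u(s,y)$ so that the Jacobian weight $s^{m-1}$ is absorbed and the ordinary Sobolev inequality in $\R^{n+1}$ applies (this is Lemma~\ref{continuous-embedding}); (b) Lions' compactness for $W^{2,p'}_G(A_R)\hookrightarrow L^\tau(A_R)$ for $\tau\in(p',(p')^*_N)$ (Lemma~\ref{lions-compact-embedding}); and (c) an interpolation $\|u_i-u\|_{L^d}\le\|u_i-u\|_{L^\tau}^\alpha\|u_i-u\|_{L^{(p')^*_{n+1}}}^{1-\alpha}$ which converts the boundedness from (a) and the strong convergence from (b) into strong convergence in $L^d$ for all $d<(p')^*_{n+1}$. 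No Rellich--Kondrachov on bounded pieces, no separate tightness-at-infinity argument: Lions' theorem already handles the unboundedness, and interpolation does the rest.

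Your proposal instead relies on local compactness plus tightness, and the tightness is supposed to come from a ``Strauss-type pointwise decay $|u_k(x)|\lesssim s^{-(m-1)/2}$'' which you attribute to Lions. This is a genuine gap for two reasons. First, Lions' result is a compactness theorem, not a pointwise decay estimate; the decay rate $s^{-(m-1)/2}$ is the Strauss radial lemma for $H^1(\R^m)$ and is neither stated in nor a consequence of Lemma~\ref{lions-compact-embedding}, and its $W^{2,p'}$ analogue would have to be established separately. Second, even granting such decay, you never actually perform the weighted change of variables that is the heart of the argument: the phrase ``produces an effectively $(n+1)$-dimensional Sobolev embedding whose critical exponent is precisely $(p')^*_{n+1}$'' is the conclusion of Lemma~\ref{continuous-embedding}, not a consequence that can be asserted from symmetry and monotonicity alone. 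Without that computation you do not even get compactness on \emph{bounded} subdomains for $d\in[(p')^*_N,(p')^*_{n+1})$, since the local Rellich--Kondrachov exponent in $\R^N$ is only $(p')^*_N$. I would recommend you prove the continuous embedding explicitly, then invoke Lions and interpolate as the paper does; the tightness route is both harder and, here, unnecessary.
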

\begin{remark}\label{rmk2}
    Notice that in Proposition \ref{compact-embedding}, the range of $d$ can be understood as follows. 
    \begin{align*}
        &p'< d<\frac{(n+1)p'}{(n+1)-2p'}=\max\Big\{ \frac{(n+1)p'}{(n+1)-2p'}, \frac{(m+1)p'}{(m+1)-2p'} \Big\}~~~~\text{if}~~~(n+1)-2p'>0\\
        &p'< d<\infty~~~~~~~~~~~~~~~~~~~~~~~~~~~~~~~~~~~~~~~~~~~~~~~~~~~~~~~~~~~~~~~~~~~~~~\text{if}~~~(n+1)-2p'\leq0.
    \end{align*}
    Moreover, the above inequalities can be rewritten as 
    \begin{align}
        &\frac{1}{p}+\frac{1}{d}>1-\frac{2}{n+1}=\min\Big\{ 1-\frac{2}{n+1}, 1-\frac{2}{m+1}\Big\}~~~~\text{if}~~~~~ n>\frac{p+1}{p-1}\label{m0}\\ 
        &\text{no lower bound condition imposed on}~~\frac{1}{p}+\frac{1}{d}~~~~~~~~~~~\text{if}~~~~~ n\leq\frac{p+1}{p-1}\label{n0}.
    \end{align}
    Indeed, since $p'=\frac{p}{p-1}$, then
    \begin{equation*}
        (n+1)-2p'=(n+1)-\frac{2p}{p-1}=\frac{(n+1)(p-1)-2p}{p-1}=\frac{n(p-1)-p-1}{p-1}. 
    \end{equation*}
    So, 
    \begin{align}\label{m}
        (n+1)-2p'>0 \Longleftrightarrow n>\frac{p+1}{p-1}
    \end{align}
    and
    \begin{align}\label{n}
        (n+1)-2p'\leq0 \Longleftrightarrow n\leq\frac{p+1}{p-1}.
    \end{align}
    Now
    \begin{align}\label{o}
        \frac{(n+1)p'}{(n+1)-2p'}=\frac{(n+1)p}{(n+1)(p-1)-2p}=\frac{(n+1)p}{n(p-1)-p-1}.
    \end{align}
    Thus, from \eqref{m}, \eqref{n}, and \eqref{o}, we deduce \eqref{m0} and \eqref{n0}.
\end{remark}
\begin{remark}\label{rmk2_1}
    The restriction $1<n$ that is, $2\leq n$ that we require in Proposition \ref{compact-embedding} as well as in the whole paper is due to the fact that the compact embedding by Lion (see Lemma \ref{lions-compact-embedding}) holds for $N$ of the form $N=\sum_{i=1}^{l}N_i$ with $N_i \geq 2$. As a consequence of this, our main results are left to be open for $N=3$. 
\end{remark}
We now wish to prove Proposition \ref{compact-embedding}. To this end, we first collect some technical lemmas. 
\begin{lemma}\label{continuous-embedding}
    Let $N=m+n$ with $1< n\leq m$. If $(n+1)-2p'>0$, then, for all $p'\leq d\leq\frac{(n+1)p'}{(n+1)-2p'}=\max\{(p')^*_{m+1}, (p')^*_{n+1}\}$, there exists a positive constant $C=C(n,d)>0$ such that
    \begin{equation}\label{p0}
        \|u\|_{L^d(A_R)}\leq C\|u\|_{W^{2,p'}(A_R)}~~~\text{for all}~~u\in K.
    \end{equation}
    If $(n+1)-2p'\leq0$ then \eqref{p0} holds for all $d\in[p',\infty)$. 
\end{lemma}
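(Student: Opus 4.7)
The plan is to prove the Sobolev-type inequality in Lemma \ref{continuous-embedding} by combining the classical $W^{2,p'}$ Sobolev embedding with the symmetry and monotonicity constraints built into the cone $K$, which together reduce the effective ambient dimension of the problem from $N$ down to $n+1$.

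I would begin with the two endpoint cases. The bound at $d = p'$ is immediate from $W^{2,p'}(A_R)\hookrightarrow L^{p'}(A_R)$. For the classical Sobolev exponent $d=(p')^*_N := \frac{Np'}{N-2p'}$ (valid when $N>2p'$), the inequality follows from the standard embedding $W^{2,p'}(A_R)\hookrightarrow L^{(p')^*_N}(A_R)$, and interpolation covers every $d\in[p',(p')^*_N]$. However, since $n+1\leq N$ gives $(p')^*_{n+1}\geq (p')^*_N$, the range claimed in the lemma extends beyond what the classical embedding alone yields, and the symmetry/monotonicity are essential to reach $(p')^*_{n+1}$.

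To enlarge the admissible range, I would express the $L^d$ integral in the half-plane coordinates afforded by the $O(m)\times O(n)$-invariance:
\[
\int_{A_R}|u|^d\,dx = c_{m,n}\int_{\widehat A_R}|u(s,t)|^d\,s^{m-1}t^{n-1}\,ds\,dt,
\]
and split $\widehat A_R = \widehat A_R^{(1)}\cup\widehat A_R^{(2)}$ with $\widehat A_R^{(1)}=\{s\geq t\}$ and $\widehat A_R^{(2)}=\{s<t\}$. On $\widehat A_R^{(2)}$, the monotonicity $su_t-tu_s\leq 0$ gives the pointwise comparison $u(s,t)\leq u(t,s)$, so after the substitution $(s,t)\mapsto(t,s)$ the contribution of $\widehat A_R^{(2)}$ is dominated by an integral over $\widehat A_R^{(1)}$ with the exponents in the weight exchanged. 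Using $t\leq s$ on $\widehat A_R^{(1)}$ together with $n\leq m$, both pieces can be identified, up to constants, with the $L^d$-norm of an auxiliary function $U$ on a subdomain of $\R^{n+1}$ whose $W^{2,p'}$-norm is controlled by $\|u\|_{W^{2,p'}(A_R)}$. Applying the classical Sobolev embedding $W^{2,p'}(\R^{n+1})\hookrightarrow L^{(p')^*_{n+1}}(\R^{n+1})$ to $U$ and interpolating with $L^{p'}$ then yields the inequality for all $d\in[p',(p')^*_{n+1}]$. In the regime $(n+1)-2p'\leq 0$ (equivalently $n\leq\frac{p+1}{p-1}$), the same argument applies with Morrey's embedding $W^{2,p'}(\R^{n+1})\hookrightarrow L^\infty(\R^{n+1})$ in place of the Sobolev one, giving the bound for every $d\in[p',\infty)$.

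The main obstacle is the rigorous dimension-reduction step: namely, constructing the auxiliary function on $\R^{n+1}$ and quantitatively controlling its $W^{2,p'}$-norm by $\|u\|_{W^{2,p'}(A_R)}$ uniformly in $u\in K$. This is where the monotonicity $u_\theta\leq 0$ plays the decisive role, since it is precisely this constraint that allows the ``excess'' weight $s^{m-n}$ appearing after the reduction to be absorbed and the $(n+1)$-dimensional Sobolev constant to be transferred back to the original integral.
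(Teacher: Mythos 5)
Your proposal is correct and follows essentially the same mechanism as the paper: use monotonicity to reduce the $L^d$-integral to a region where $s$ is bounded away from zero, pass to an $(n+1)$-dimensional domain via the auxiliary function $U(s,y)=s^{(m-1)/d}u(s,y)$, and apply the Sobolev embedding $W^{2,p'}\hookrightarrow L^{(p')^*_{n+1}}$ there, controlling $\|U\|_{W^{2,p'}}$ by $\|u\|_{W^{2,p'}(A_R)}$ thanks to the lower bound on $s$. The only cosmetic difference is that you split at the diagonal $s=t$ and use the reflection $(s,t)\mapsto(t,s)$, whereas the paper splits at $\theta=\pi/3$ and uses a shift $\theta\mapsto\theta-\pi/4$; your variant is arguably cleaner (no ad hoc constant in $\sin\theta\le c\sin(\theta-\pi/4)$), and the region $\{s>t\}$ still gives $s>R/\sqrt{2}$, which is all that the weight-absorption step requires.
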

\begin{proof}
    The proof of this lemma takes inspiration from \cite[Lemma 2.4]{boscaggin2023multiplicity}. 
    
    Let $u\in K$. We have
    \begin{equation*}
        \|u\|^d_{L^{d}(A_R)}=\int_{A_R}|u(x)|^d\ dx=\omega_{m-1}\omega_{n-1}\int_{\widehat{A}_R}|u(s,t)|^d s^{m-1} t^{n-1}\ dsdt,
    \end{equation*}
    where $\omega_{m-1}$ (resp. $\omega_{n-1}$) denotes the surface measure of the sphere $\mathbb{S}^{m-1}$ (resp. $\mathbb{S}^{n-1}$).
    
    Using polar coordinates $s=r\cos\theta,~t=r\sin\theta$, we have
    \begin{align}\label{ld}
     \nonumber   \|u\|^d_{L^{d}(A_R)}&=\omega_{m-1}\omega_{n-1}\int^{\infty}_{R}\int^{\frac{\pi}{2}}_{0}r^{m-1}(\cos\theta)^{m-1} r^{n-1} (\sin\theta)^{n-1}|u(r,\theta)|^d r\ d\theta dr\\
     \nonumber   &=\omega_{m-1}\omega_{n-1}\int^{\infty}_{R}\int^{\frac{\pi}{3}}_{0}r^{m-1}(\cos\theta)^{m-1} r^{n-1} (\sin\theta)^{n-1}|u(r,\theta)|^d r\ d\theta dr\\
     \nonumber   &+ \omega_{m-1}\omega_{n-1}\int^{\infty}_{R}\int^{\frac{\pi}{2}}_{\frac{\pi}{3}}r^{m-1}(\cos\theta)^{m-1} r^{n-1} (\sin\theta)^{n-1}|u(r,\theta)|^d r\ d\theta dr\\
        &=: J^1_R+J^2_R
    \end{align}
 where
 \begin{align*}
     J^1_R=\omega_{m-1}\omega_{n-1}\int^{\infty}_{R}\int^{\frac{\pi}{3}}_{0}r^{m-1}(\cos\theta)^{m-1} r^{n-1} (\sin\theta)^{n-1}|u(r,\theta)|^d r\ d\theta dr
 \end{align*}
and
\begin{align*}
    J^2_R=\omega_{m-1}\omega_{n-1}\int^{\infty}_{R}\int^{\frac{\pi}{2}}_{\frac{\pi}{3}}r^{m-1}(\cos\theta)^{m-1} r^{n-1} (\sin\theta)^{n-1}|u(r,\theta)|^d r\ d\theta dr.
\end{align*}
To estimate $J^2_R$, we first observe that for $\theta\in [\frac{\pi}{3},\frac{\pi}{2}]$, there holds that $\sin\theta\leq c\sin(\theta-\frac{\pi}{4})$ for some $c>0$. Using this and the fact that $\theta\mapsto u(r,\theta)$ and $\theta\mapsto\cos\theta$ are decreasing, we have 
\begin{align*}
    J^2_R&\leq c^{n-1}\omega_{m-1}\omega_{n-1}\int^{\infty}_{R}\int^{\frac{\pi}{2}}_{\frac{\pi}{3}}r^{m-1}\Big(\cos(\theta-\frac{\pi}{4})\Big)^{m-1} r^{n-1} \Big(\sin(\theta-\frac{\pi}{4})\Big)^{n-1}u(r,\theta-\frac{\pi}{4})^d r \\
    &=c^{n-1}\omega_{m-1}\omega_{n-1}\int^{\infty}_{R}\int^{\frac{\pi}{4}}_{\frac{\pi}{12}}r^{m-1}(\cos\theta)^{m-1} r^{n-1} (\sin\theta)^{n-1}u(r,\theta)^d r\ d\theta dr\\
    &\leq c^{n-1}\omega_{m-1}\omega_{n-1}\int^{\infty}_{R}\int^{\frac{\pi}{3}}_{0}r^{m-1}(\cos\theta)^{m-1} r^{n-1} (\sin\theta)^{n-1}u(r,\theta)^d r\ d\theta dr\\
    &=c^{n-1}J^1_R.
\end{align*}
So
\begin{equation}\label{j}
    J^2_R\leq c^{n-1} J^1_R.
\end{equation}
We now turn our attention to $J^1_R$. We have
\begin{align*}
     J^1_R&=\omega_{m-1}\omega_{n-1}\int^{\infty}_{R}\int^{\frac{\pi}{3}}_{0}r^{m-1}(\cos\theta)^{m-1} r^{n-1} (\sin\theta)^{n-1}|u(r,\theta)|^d r\ d\theta dr\\
     &=\omega_{m-1}\omega_{n-1}\int_{\Big\{\substack{(s,t)\in\R^2:~ s^2+t^2>R^2,~ s\geq0, ~t\geq0,\\ \arctan(\frac{t}{s})\in (0, \frac{\pi}{3})} \Big\}}|u(s,t)|^d s^{m-1} t^{n-1}\ dsdt.
 \end{align*}
We write $u(s,t)=u(s,y)$ with $|y|=t$. then
\begin{align*}
     J^1_R&=\omega_{m-1}\int_{\Big\{\substack{(s,y)\in\R^1\times\R^{N-m}:~ s^2+|y|^2>R^2,~ s\geq0,\\ \arctan(\frac{|y|}{s})\in (0, \frac{\pi}{3})} \Big\}}|u(s,y)|^d s^{m-1}\ dsdy\\
     &=\omega_{m-1}\int_{\Pi_R}|u(s,y)|^d s^{m-1}\ ds dy
 \end{align*}
where
\begin{equation*}
    \Pi_R:=\left\{
        (s,y)\in\R^1\times\R^{N-m}:~ s^2+|y|^2>R^2,~ s\geq0,~~\arctan\Big(\frac{|y|}{s}\Big)\in \Big(0,\frac{\pi}{3}\Big) 
    \right\}.
\end{equation*}
It is easily seen that
\begin{equation}\label{lower-bound-for-s}
    s>\frac{R}{2},~~~\forall (s,y)\in \Pi_R.
\end{equation}
We now set 
\begin{equation*}
    v(s,y)=s^{\frac{m-1}{d}}u(s,y).
\end{equation*}
From the Sobolev embedding $W^{2,p'}(\Pi_R)\hookrightarrow L^d(\Pi_R)$ in $\R^{N-m+1}=\R^{n+1}$, we have 
\begin{align*}
    &J^1_R=\omega_{m-1}\int_{\Pi_R}|v(s,y)|^d \ ds dy\\
    &\leq C_1\Bigg(\int_{\Pi_R}|D^2v(s,y)|^{p'}+|\nabla v(s,y)|^{p'}+|v(s,y)|^{p'}\ dsdy\Bigg)^{\frac{d}{p'}}\\
    &\leq C_2 \Bigg(\int_{\Pi_R}\Big[s^{\frac{(m-1)p'}{d}}|D^2u(s,y)|^{p'}+\Big(s^{\frac{(m-1)p'}{d}-p'}+s^{\frac{(m-1)p'}{d}}\Big)|\nabla u(s,y)|^{p'}\\
    &~~~~~~~~~~~~~~~+\Big(s^{\frac{(m-1)p'}{d}-2p'}+s^{\frac{(m-1)p'}{d}-p'}\Big)|u(s,y)|^{p'}\Big]\ dsdy\Bigg)^{\frac{d}{p'}}\\
    &=C_2 \Bigg(\int_{\Pi_R}\Big[s^{\frac{(m-1)p'}{d}-m+1}|D^2u(s,y)|^{p'}+\Big(s^{\frac{(m-1)p'}{d}-p'-m+1}+s^{\frac{(m-1)p'}{d}-m+1}\Big)|\nabla u(s,y)|^{p'}\\
    &~~~~~~~~~~~~~~~+\Big(s^{\frac{(m-1)p'}{d}-2p'-m+1}+s^{\frac{(m-1)p'}{d}-p'-m+1}\Big)|u(s,y)|^{p'}\Big]s^{m-1}\ dsdy\Bigg)^{\frac{d}{p'}}\\
    &=C_2 \Bigg(\int_{\Pi_R}\Big[s^{\beta_1}|D^2u(s,y)|^{p'}+\Big(s^{\beta_2}+s^{\beta_1}\Big)|\nabla u(s,y)|^{p'}\\
    &\quad\quad\quad\quad\quad\quad\quad\quad\quad\quad\quad\quad\quad+\Big(s^{\beta_3}+s^{\beta_2}\Big)|u(s,y)|^{p'}\Big]s^{m-1}\ dsdy\Bigg)^{\frac{d}{p'}}
\end{align*}
where
\begin{align*}
    \beta_1&:=\frac{(m-1)p'}{d}-m+1\\
    \beta_2&:=\frac{(m-1)p'}{d}-p'-m+1\\
    \beta_3&:=\frac{(m-1)p'}{d}-2p'-m+1.
\end{align*}
Recalling \eqref{lower-bound-for-s} and using that $\beta_3<\beta_2<\beta_1\leq0$, there holds 
\begin{align}\label{x}
 \nonumber  J^1_R&\leq C_3\Bigg(\int_{\Pi_R}\Big(|D^2u|^{p'}+|\nabla u|^{p'}+|u|^{p'}\Big)s^{m-1}\ ds dy \Bigg)^{\frac{d}{p'}}\\
\nonumber &\leq C_4\Bigg(\int_{A_R}\Big(|D^2u|^{p'}+|\nabla u|^{p'}+|u|^{p'}\Big)\ dz dy \Bigg)^{\frac{d}{p'}}~~~\text{where}~~|z|=s\\
 &=C_4\|u\|^{d}_{W^{2,p'}(A_R)}.
\end{align}
Here, $C_i~(i\in\{1,2,3,4\})$ are generic positive constants. From \eqref{x}, \eqref{j}, and \eqref{ld} we conclude the proof.
\end{proof}

\begin{lemma}[\cite{lions1982symetrie}]\label{lions-compact-embedding}
    $W^{2,p'}_G(A_R)$ is compactly embedded in $L^d(A_R)$ for all $d\in (p', (p')^*_N)$ where $(p')^*_N:=\frac{Np'}{N-2p'}$. 
\end{lemma}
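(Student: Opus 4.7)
The plan is to combine a uniform pointwise decay at infinity for functions in $W^{2,p'}_G(A_R)$ with the Rellich--Kondrachov theorem on bounded subsets of $A_R$. Since $d<(p')^*_N$ is a subcritical Sobolev exponent, the embedding $W^{2,p'}(A_R)\hookrightarrow L^d(A_R)$ is already continuous, so the new content is strong compactness, which must come from the symmetry under $G=O(m)\times O(n)$.

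First I would set up a weighted representation in the $(s,t)$--plane. Any $u\in W^{2,p'}_G(A_R)$ has the form $u(x)=\tilde u(s,t)$ with $s=|(x_1,\ldots,x_m)|$ and $t=|(x_{m+1},\ldots,x_N)|$, and
\begin{equation*}
\|u\|_{L^{p'}(A_R)}^{p'}=\omega_{m-1}\omega_{n-1}\int_{\widehat{A}_R}|\tilde u(s,t)|^{p'}\,s^{m-1}\,t^{n-1}\,ds\,dt,
\end{equation*}
together with analogous formulas for the first and second derivatives. Setting $v(s,t):=s^{(m-1)/p'}t^{(n-1)/p'}\tilde u(s,t)$ turns these weighted integrals over $\widehat{A}_R\subset\R^2$ into unweighted ones, up to lower-order terms carrying negative powers of $s$ and $t$; the standing assumption $m,n\geq2$ keeps those correction terms under control away from the coordinate axes $\{s=0\}$ and $\{t=0\}$. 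On the ``fat'' part of an orbit, namely at points $x\in A_R$ with $s\geq \tfrac{1}{4}|x|$ and $t\geq \tfrac{1}{4}|x|$, the two-dimensional Sobolev embedding $W^{2,p'}\hookrightarrow L^\infty$ (valid for any $p'>1$ via the chain $W^{2,p'}\hookrightarrow W^{1,q}\hookrightarrow L^\infty$ on a two-dimensional domain) applied to $v$ yields
\begin{equation*}
|\tilde u(s,t)|\leq C\,s^{-(m-1)/p'}\,t^{-(n-1)/p'}\,\|u\|_{W^{2,p'}(A_R)},
\end{equation*}
so that $|u(x)|\leq C|x|^{-(N-2)/p'}\|u\|_{W^{2,p'}(A_R)}$ on this region. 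On the two ``thin'' pieces (say $t\leq\tfrac{1}{4}|x|$, so $s\sim|x|$), I would imitate the argument carried out in the proof of Lemma~\ref{continuous-embedding}: writing $u(x)=u(s,y)$ with $|y|=t$, the angular restriction forces $s\geq R/2$, after which the standard Sobolev embedding in $\R^{n+1}$ applied to $s^{(m-1)/d}u(s,y)$ delivers a direct tail bound in $L^d$.

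With these ingredients in hand, compactness is routine. Given a bounded sequence $\{u_k\}\subset W^{2,p'}_G(A_R)$, the combined decay and tail estimates imply that for every $\varepsilon>0$ there exists $\rho>R$ with $\|u_k\|_{L^d(A_R\setminus B_\rho)}<\varepsilon$ uniformly in $k$; on the bounded annulus $A_R\cap B_\rho$ the Rellich--Kondrachov theorem for $W^{2,p'}$ extracts a subsequence converging in $L^d$, and a diagonal procedure produces a subsequence converging in $L^d(A_R)$. The hard step will be to make the pointwise decay uniform near the singular axes $\{s=0\}$ and $\{t=0\}$, where the weights degenerate: the positive dimensionality of both spheres $\mathbb{S}^{m-1}$ and $\mathbb{S}^{n-1}$, recorded in Remark~\ref{rmk2_1}, is exactly what endows the weights $s^{m-1}$ and $t^{n-1}$ with enough mass to force decay, and is the reason the statement (and the whole paper) excludes $N=3$.
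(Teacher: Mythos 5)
The paper does not actually prove this lemma; the bracketed citation \cite{lions1982symetrie} indicates it is being invoked directly as a result of Lions, and the body of the paper contains no argument for it. Your sketch therefore cannot be compared against a proof in the paper, but it can be assessed on its own terms against Lions' known strategy, which is indeed the one you are describing: a Strauss-type uniform decay estimate for $G$-invariant functions, combined with local Rellich--Kondrachov compactness and a diagonal extraction. That is the correct picture, and your observations about the role of $m,n\geq 2$ (matching Remark~\ref{rmk2_1}) and the use of the two-dimensional embedding $W^{2,p'}\hookrightarrow L^\infty$ for $p'>1$ are sound.

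There are, however, two places where the sketch has a genuine hole. First, the decay near the degenerate axes is not merely ``hard'' --- it is the entire content of Lions' compactness theorem, and your proposal leaves it undone. On the fat region you obtain $|u(x)|\lesssim |x|^{-(N-2)/p'}\|u\|_{W^{2,p'}}$ because the orbit of $x$ is a full $(N-2)$-dimensional product of spheres; but near the $s$-axis the orbit degenerates to $\mathbb{S}^{m-1}(s)\times\{0\}$ and the best decay you can hope for is $|x|^{-(m-1)/p'}$ (and symmetrically $|x|^{-(n-1)/p'}$ near the $t$-axis). You would need to derive these weaker rates carefully and then check they still yield uniform tail smallness in $L^d$; this works via the interpolation $\int_{|x|>\rho}|u|^d\leq(\sup_{|x|>\rho}|u|)^{d-p'}\|u\|_{L^{p'}}^{p'}$ (which requires $d>p'$ and uses $m-1\geq1$, $n-1\geq1$), but none of this appears in the proposal. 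Second, your appeal to ``imitating the argument in the proof of Lemma~\ref{continuous-embedding}'' for the thin pieces is misdirected: that argument produces a \emph{continuous} embedding, not a tail estimate. Boundedness of the $L^d$ norm on a thin wedge does not show that the mass escapes to infinity in a controlled way; you still need a rate, which again must come from the decay estimate (or from an explicit $J^1$-type estimate on $A_\rho$ with $\rho\to\infty$, which is not how Lemma~\ref{continuous-embedding} is phrased). Also note that Lemma~\ref{continuous-embedding} concerns the cone $K$ and uses the monotonicity $u_\theta\leq0$ in the $J^2_R\leq cJ^1_R$ step; that piece of its proof is not available for a generic $u\in W^{2,p'}_G(A_R)$, so if you were to adapt it you would have to treat the two wedges $\theta$ near $0$ and $\theta$ near $\pi/2$ symmetrically with Sobolev embeddings in $\R^{n+1}$ and $\R^{m+1}$ respectively, without the monotonicity shortcut. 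In short: your architecture is right, but the load-bearing wall --- the uniform decay near the singular axes and its propagation into a uniform $L^d$ tail bound --- is identified but not built.
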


We are now ready to prove Proposition \ref{compact-embedding}.

\begin{proof}[Proof of Proposition \ref{compact-embedding}]
    Let $(u_i)\subset K$ be a bounded sequence with respect to the $W^{2,p'}(A_R)$-norm. Then, from Lemmas \ref{continuous-embedding} and \ref{lions-compact-embedding}, there exists $u\in K$ such that
    \begin{equation}\label{k}
        \begin{aligned}
            &u_i\rightharpoonup u~~~\text{weakly in}~~W^{2,p'}(A_R)\\
            &u\to u~~~\text{strongly in}~~~L^{\tau}(A_R)
        \end{aligned}
    \end{equation}
    for all $\tau\in (p', (p')^*_N)$. We consider the case $(n+1)-2p'>0$. We recall that $(p')^*_{n+1}>(p')^*_N$. Now, for fix $\tau\in (p', (p')^*_N)$ and $d\in [(p')^*_N, (p')^*_{n+1})$, interpolation inequality yields 
    \begin{equation}\label{l}
        \|u_i-u\|_{L^d(A_R)}\leq \|u_i-u\|^{\alpha}_{L^{\tau}(A_R)}\|u_i-u\|^{1-\alpha}_{L^{(p')^*_{n+1}}(A_R)}
    \end{equation}
    for some $\alpha\in (0,1)$ with $\frac{1}{d}=\frac{\alpha}{\tau}+\frac{1-\alpha}{(p')^*_{n+1}}$. Since by Lemma \ref{continuous-embedding} the quantity $\|u_i-u\|^{1-\alpha}_{L^{(p')^*_{n+1}}(A_R)}$ is uniformly bounded, then letting $i\to\infty$ in \eqref{l} and using \eqref{k}, we find that
    \begin{equation*}
        u_i\to u~~~\text{strongly in}~~L^d(A_R).
    \end{equation*}
    In the case when $(n+1)-2p'\leq0$, we fix $\tau\in (p', (p')^*_N)$ and $d\in[(p')^*_N,\infty)$. Then, as above, we apply again interpolation inequality to get
    \begin{equation}\label{p1}
        \|u_i-u\|_{L^d(A_R)}\leq\|u_i-u\|^{\alpha}_{L^{\tau}(A_R)}\|u_i-u\|^{1-\alpha}_{L^{2d}(A_R)}
    \end{equation}
    with $\frac{1}{d}=\frac{\alpha}{\tau}+\frac{1-\alpha}{2d}$. Using that $\|u_i-u\|^{1-\alpha}_{L^{2d}(A_R)}$ is bounded, thanks to Lemma \ref{continuous-embedding}, the proof is completed by sending $i\to\infty$ in \eqref{p1} and by using \eqref{k}.
\end{proof}

As a direct consequence of Proposition \ref{compact-embedding}, the $C^2$-functional $I$ introduced in \eqref{euler-lagrange-function} is well-defined on the Banach space 
\begin{equation*}
    V=W^{2,p'}(A_R)\cap H^1_0(A_R)\cap L^q(A_R).
\end{equation*}
Moreover, the following two-sided inequality also holds
\begin{equation}\label{two-sided-inequality}
    \|u\|_{W^{2,p'}(A_R)}\leq \|u\|_V\leq C\|u\|_{W^{2,p'}(A_R)},~~~~\forall u\in K.
\end{equation}
Exploiting the above two-sided inequality, we prove the following result which asserts that the functional $I$ has a critical point. 
\begin{lemma}\label{critical-point}
    The functional $I$ has a critical point $\overline{u}\in K$ in the sense of Definition \ref{critical-point-szulking}.
\end{lemma}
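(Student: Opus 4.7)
My plan is to apply Szulkin's mountain pass theorem (Theorem \ref{mountain-pass-theorem}) to $I=\Psi-\Phi$ on the reflexive Banach space $V=W^{2,p'}(A_R)\cap H^1_0(A_R)\cap L^q(A_R)$, restricted to the convex cone $K$ where, by \eqref{two-sided-inequality}, the $V$-norm is equivalent to the $W^{2,p'}$-norm. Two things then need to be checked: the mountain pass geometry and the Palais-Smale condition of Definition \ref{palais-smale-compactness-condition}.

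First I would verify the geometric conditions. Clearly $I(0)=0$. For a positive lower bound near the origin, I combine the Agmon-Douglis-Nirenberg estimate of Lemma \ref{agmon-douglis-nirenberg-result} with the positive lower bound on $a$ to obtain $\Psi(u)\geq C_1\|u\|_{W^{2,p'}}^{p'}$, while the continuous embedding of Lemma \ref{continuous-embedding} gives $\Phi(u)\leq C_2\|u\|_{W^{2,p'}}^{q}$. Since $p>2$ forces $p'<2<q$, for $\rho>0$ small one has $I(u)\geq C_1\rho^{p'}-C_2\rho^q>0$ on $\{u\in K:\|u\|_V=\rho\}$. For the existence of $e$ with $I(e)\leq 0$, I fix any $u_0\in K\setminus\{0\}$ and use the scalar behaviour $I(tu_0)=t^{p'}\Psi(u_0)-t^q\Phi(u_0)\to-\infty$ as $t\to+\infty$, so $e=t_0u_0$ works for $t_0$ large.

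Next I would verify the Palais-Smale condition. For a PS sequence $\{u_i\}\subset K$, testing the inequality of Definition \ref{palais-smale-compactness-condition} with $v=(1+t)u_i$ and letting $t\to0^+$ gives
\[
p'\Psi(u_i)-q\Phi(u_i)\geq -\varepsilon_i\|u_i\|_V.
\]
Combined with $\Psi(u_i)-\Phi(u_i)=c+o(1)$ and $q>p'$, this yields $\Psi(u_i)\leq C+C\varepsilon_i\|u_i\|_V$; since $p'>1$, the lower bound $\Psi(u_i)\geq C_1\|u_i\|_{W^{2,p'}}^{p'}$ together with \eqref{two-sided-inequality} then forces $\{u_i\}$ to be bounded in $V$. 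By Proposition \ref{compact-embedding} and the weak closedness of $K$ in $W^{2,p'}(A_R)$, a subsequence satisfies $u_i\rightharpoonup\overline{u}\in K$ weakly in $W^{2,p'}(A_R)$ and $u_i\to\overline{u}$ strongly in $L^q(A_R)$.

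The step I expect to be the main obstacle is upgrading this to strong convergence in $V$. I plan to test Definition \ref{palais-smale-compactness-condition} with $v=\overline{u}$ to obtain
\[
\Psi(u_i)-\Psi(\overline{u})\leq\langle D\Phi(u_i),u_i-\overline{u}\rangle+\varepsilon_i\|u_i-\overline{u}\|_V,
\]
whose right-hand side vanishes because $D\Phi(u_i)=b|u_i|^{q-2}u_i$ is bounded in $L^{q/(q-1)}(A_R)$ while $u_i\to\overline{u}$ in $L^q(A_R)$. Together with weak lower semicontinuity of $\Psi$, this forces $\Psi(u_i)\to\Psi(\overline{u})$, i.e., norm convergence of $-\Delta u_i+u_i$ in the weighted space $L^{p'}(A_R,a^{-(p'-1)}dx)$. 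Since $p'\in(1,2)$ this space is uniformly convex, so weak convergence of $-\Delta u_i+u_i$ (inherited from $u_i\rightharpoonup\overline{u}$ in $W^{2,p'}$) combined with convergence of norms yields strong convergence in $L^{p'}$; Lemma \ref{agmon-douglis-nirenberg-result} then lifts this to strong convergence of $u_i\to\overline{u}$ in $W^{2,p'}(A_R)$, and \eqref{two-sided-inequality} closes the argument. Theorem \ref{mountain-pass-theorem} finally delivers the desired critical point $\overline{u}\in K$ at the positive mountain pass level.
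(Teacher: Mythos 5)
Your proof is correct and follows essentially the same route as the paper: verifying the mountain pass geometry using the $\Psi\gtrsim\|\cdot\|_{W^{2,p'}}^{p'}$ lower bound with the continuous embedding for $\Phi$, deriving boundedness of PS sequences by testing the variational inequality with a dilation of $u_i$, and then upgrading weak convergence to strong convergence in $W^{2,p'}$ by testing with the weak limit $\overline{u}$ and using that $\langle D\Phi(u_i),u_i-\overline{u}\rangle\to 0$ thanks to the compact embedding $K\hookrightarrow L^q$. The only cosmetic difference is in the boundedness step, where you differentiate the Szulkin inequality along $v=(1+t)u_i$ as $t\to 0^+$ to get $p'\Psi(u_i)-q\Phi(u_i)\geq-\varepsilon_i\|u_i\|_V$, whereas the paper tests with a fixed $v_i=(1+\delta/q)u_i$ and then combines linearly with the energy bound, choosing $\delta$ so that $\delta+1>(1+\delta/q)^{p'}$; both are standard variants and yield the same conclusion. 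You are also slightly more explicit than the paper in invoking weak lower semicontinuity and uniform convexity to pass from norm convergence to strong convergence, which the paper leaves implicit.
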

  
\begin{proof}
    We first notice that by Lemma \ref{agmon-douglis-nirenberg-result}, and using that $a$ is bounded from above and below, 
\begin{equation}
    \|u\|_{W^{2,p'}(A_R)}:=\Bigg( \int_{A_R}\frac{|-\Delta u+u|^{p'}}{a(x)^{p'-1}}\ dx \Bigg)^{\frac{1}{p'}}
\end{equation}
is an equivalent norm on $ W^{2,p'}(A_R)\cap H^1_0(A_R)$. The rest of the proof is divided into two steps:\\\\
\textbf{Step 1:} Verifying the Palais-Smale compactness condition.

Let $\{u_i\}\subset K$ be a sequence such that $I(u_i)\to c$ and
\begin{equation}\label{a}
    \langle D\Phi(u_i), u_i-v\rangle+\Psi(v)-\Psi(u_i)\geq -\varepsilon_i\|u_i-v\|_V~~\forall v\in V
\end{equation}
where $\varepsilon_i\to0$. Then, for $i$ sufficiently large,
\begin{align}\label{b}
  \nonumber  I(u_i)&=\frac{1}{p'}\int_{A_R}\frac{|-\Delta u_i+u_i|^{p'}}{a(x)^{p'-1}}\ dx-\frac{1}{q}\int_{A_R}b(x)|u_i|^q\ dx\\
    &=\frac{1}{p'}\|u_i\|^{p'}_{W^{2,p'}(A_R)}-\frac{1}{q}\int_{A_R}b(x)|u_i|^q\ dx\leq c+1.
\end{align}
Set $v_i=ru_i$ with $r=1+\frac{\delta}{q}$ where $\delta>0$ will be choose later. Substituting $v=v_i$ in \eqref{a}, we have
\begin{align*}
    \frac{r^{p'}-1}{p'}\int_{A_R}\frac{|-\Delta u_i+u_i|^{p'}}{a(x)^{p'-1}}\ dx+(1-r)\int_{A_R}b(x)|u_i|^{q}\geq-\varepsilon_i(r-1)\|u_i\|_{V}
\end{align*}
that is
\begin{align}\label{c}
    \frac{1-r^{p'}}{p'}\|u_i\|^{p'}_{W^{2,p'}(A_R)}+(r-1)\int_{A_R}b(x)|u_i|^{q}\leq\varepsilon_i(r-1)\|u_i\|_{V}.
\end{align}
Multiplying \eqref{b} by $\delta$ and summing up with \eqref{c} give
\begin{align*}
    \Big(\frac{\delta+1-r^{p'}}{p'}\Big)\|u_i\|^{p'}_{W^{2,p'}(A_R)}&\leq\delta c_0+\delta+\varepsilon_i\frac{\delta}{q}\|u_i\|_V\\
    &\leq \delta c_0+\delta+\varepsilon_iC\frac{\delta}{q}\|u_i\|_{W^{2,p'}(A_R)}.
\end{align*}
In the latter, we have used \eqref{two-sided-inequality}. Now, by choosing $\delta$ such that%\footnote{Such $\delta$ exists since $q>2>p'$.}
\begin{equation*}
    \delta+1>\Big(1+\frac{\delta}{q}\Big)^{p'}
\end{equation*}
there holds that
\begin{equation*}
    \|u_i\|_{W^{2,p'}(A_R)}\leq C_1(1+C_2\|u_i\|_{W^{2,p'}(A_R)}).
\end{equation*}
Thus, $\{u_i\}$ is a bounded sequence in $W^{2,p'}(A_R)$. So, after passing to a subsequence, there exists $u\in W^{2,p'}(A_R)$ such that 
\begin{equation*}
    \begin{aligned}
        &u_i\rightharpoonup u~~\text{weakly in}~~W^{2,p'}(A_R)\\
        &u_i\to u~~\text{strongly in}~~L^{q}(A_R)\\
        &u_i\to u~~\text{a.e. in}~~A_R.
    \end{aligned}
\end{equation*}
Notice that the second limit follows from the compact embedding $K\hookrightarrow L^{q}(A_R)$ (thanks to Proposition \ref{compact-embedding}). Next, by substitution $v=u$ in \eqref{b}, we get
\begin{align*}
    \frac{1}{p'}\Big(\|u\|^{p'}_{W^{2,p'}(A_R)}-\|u_i\|^{p'}_{W^{2,p'}(A_R)}\Big)+\int_{A_R}b(x)(u_i-u)\ dx\geq-\varepsilon_i\|u_i-u\|_V
\end{align*}
from which we deduce that
\begin{equation*}
    \frac{1}{p'}\Big(\limsup_{i\to\infty}\|u_i\|^{p'}_{W^{2,p'}(A_R)}-\|u\|^{p'}_{W^{2,p'}(A_R)}\Big)\leq0
\end{equation*}
and thus
\begin{equation*}
    u_i\to u~~\text{strongly in}~~W^{2,p'}(A_R).
\end{equation*}
This implies that $u_i\to u~~\text{strongly in}~~V$, as wanted.\\\\
\textbf{Step 2:} Verifying the mountain pass geometry of Theorem \ref{mountain-pass-theorem}.

By definition, we have $I(0)=0$ and thus condition $(i)$ in Theorem \ref{mountain-pass-theorem} is satisfied. Regarding condition $(ii)$, we proceed as follows. Let $u\in K$. Then for all $t\geq0$,
\begin{align*}
    I(tu)&=\frac{t^{p'}}{p'}\int_{A_R}\frac{|-\Delta u+u|^{p'}}{a(x)^{p'-1}}\ dx-\frac{t^q}{q}\int_{A_R}b(x)|u|^{q}\ dx\\
    &=\frac{t^{p'}}{p'}\|u\|^{p'}_{W^{2,p'}(A_R)}-\frac{t^q}{q}\int_{A_R}b(x)|u|^q\ dx.
\end{align*}
Since $q>2>p'$, then for $t$ sufficiently large, we have $I(tu)<0$. So, condition $(ii)$ follows for $e=tu$. We now turn our attention to condition $(iii)$ in Theorem \ref{mountain-pass-theorem}. Consider $u\in V$ such that $\|u\|_V=\rho>0$. If $u\notin K$, then $I(u)>0$ thanks to the definition of $\psi_K$ (see \eqref{psi-k}). Now, if $u\in K$,
\begin{align*}
    I(u)&=\frac{1}{p'}\|u\|^{p'}_{W^{2,p'}(A_R)}-\frac{1}{q}\int_{A_R}b(x)|u|^q\ dx\\
    &\geq\frac{c_1}{p'}\|u\|^{p'}_V-\frac{c_2}{q}\|u\|^q_V.
\end{align*}
In the latter, we have used \eqref{two-sided-inequality} and the fact that 
\begin{equation*}
    \int_{A_R}b(x)|u|^q\ dx\leq c_2\|u\|^q_{V}.
\end{equation*}
Thus
\begin{equation*}
    I(u)\geq \frac{c_1}{p'}\rho^{p'}-\frac{c_2}{q}\rho^q.
\end{equation*}
Using again that $q>2>p'$ it follows that for $\rho$ sufficiently small, $I(u)>0$. This proves condition $(iii)$.

Consequently, by the mountain pass theorem (see Theorem \ref{mountain-pass-theorem}), $I$ has a critical point $\overline{u}$ characterized by
\begin{equation*}
    I(\overline{u})=\inf_{\gamma\in \Gamma}\sup_{t\in [0,1]}I(\gamma(t))=:c
\end{equation*}
where $\Gamma=\{\gamma\in C([0,1],V): \gamma(0)=0,~\gamma(1)=e,~I(\gamma(1))\leq0\}$.
\end{proof}
The next proposition follows from \cite[Proposition 2.10]{boscaggin2023multiplicity}.
\begin{prop}\label{point-wise-invariance-single-equation}
Let $w(x)$ satisfy assumption $(\cH)$ and let $0\leq\widehat{u}\in H^1_{0,G}(A_R)\cap L^{d}(A_R)$ with $s\widehat{u}_t-t\widehat{u}_s\leq0$ a.e. in $\widehat{A}_R$ where $d>2$. Suppose that $\widehat{v}$ is the solution of 
    \begin{equation}
        \left\{
        \begin{aligned}
            -\Delta \widehat{v}+\widehat{v}&=w(x)\widehat{u}^{d-1}~~\text{in}~~A_R\\
            \widehat{v}&=0~~~~~~~~~~~~\text{on}~~\partial A_R.
        \end{aligned}
        \right.
    \end{equation}
    Then, $0\leq\widehat{v}\in H^1_{0,G}(A_R)\cap L^{d}(A_R)$ with $s\widehat{v}_t-t\widehat{v}_s\leq0$ a.e. in $\widehat{A}_R$.
\end{prop}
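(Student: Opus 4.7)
The plan is to verify the three conclusions—non-negativity, $G$-invariance, and the monotonicity $s\widehat{v}_t-t\widehat{v}_s\leq 0$—in sequence, using the fact that $\widehat{v}$ solves a linear elliptic problem with an explicit right-hand side. First, because $w>0$ and $\widehat{u}\geq 0$, the right-hand side $w\widehat{u}^{d-1}$ is nonnegative, and the weak maximum principle for $-\Delta+1$ with zero Dirichlet data on $A_R$ yields $\widehat{v}\geq 0$. Second, since $w$ and $\widehat{u}$ are both $G$-invariant, so is $w\widehat{u}^{d-1}$; for any $g\in G$ the function $\widehat{v}\circ g^{-1}$ solves the same Dirichlet problem, so uniqueness from Lemma \ref{agmon-douglis-nirenberg-result} forces $\widehat{v}\circ g^{-1}=\widehat{v}$, and $L^d$-membership follows from elliptic regularity together with Proposition \ref{compact-embedding}.

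The main task is the monotonicity. Writing $\widehat{v}(x)=f(s,t)$, the reduced equation is $-\widetilde{\Delta}f+f=w\widehat{u}^{d-1}$ in $\widehat{A}_R$, where $\widetilde{\Delta}=\partial_{ss}+\frac{m-1}{s}\partial_s+\partial_{tt}+\frac{n-1}{t}\partial_t$. In polar coordinates $s=r\cos\theta$, $t=r\sin\theta$, one has
\begin{equation*}
\widetilde{\Delta}=\partial_{rr}+\frac{N-1}{r}\partial_r+\frac{1}{r^2}\bigl(\partial_{\theta\theta}+A(\theta)\partial_\theta\bigr),\qquad A(\theta):=(n-1)\cot\theta-(m-1)\tan\theta,
\end{equation*}
so that the rotation generator $T:=s\partial_t-t\partial_s=\partial_\theta$ satisfies the commutator identity $[T,\widetilde{\Delta}]=r^{-2}A'(\theta)\,T$. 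Applying $T$ to the reduced equation, the function $H:=Tf$ solves
\begin{equation*}
-\widetilde{\Delta}H+\Bigl(1-\tfrac{A'(\theta)}{r^2}\Bigr)H=(Tw)\,\widehat{u}^{d-1}+(d-1)\,w\,\widehat{u}^{d-2}(T\widehat{u}).
\end{equation*}
Since $A'(\theta)=-(n-1)/\sin^2\theta-(m-1)/\cos^2\theta\leq 0$, the zero-order coefficient is at least $1$, while hypothesis $(\cH)$ combined with $T\widehat{u}\leq 0$, $w>0$, $\widehat{u}\geq 0$ makes the right-hand side $\leq 0$.

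Next I would collect boundary data for $H$: on the arc $\{r=R\}$ one has $H=0$ because $\widehat{v}=0$ there; on the axes $\{\theta=0\}$ and $\{\theta=\pi/2\}$, smoothness of $\widehat{v}$ on $\R^N$ combined with the $O(m)\times O(n)$-invariance forces $f_t|_{t=0}=0$ and $f_s|_{s=0}=0$, whence $sf_t-tf_s$ vanishes; and $H$ decays at infinity because $\widehat{v}\in H^1_0(A_R)\cap W^{2,p'}(A_R)$. Testing the displayed inequality against $H^+$ against the weighted measure $s^{m-1}t^{n-1}\,ds\,dt$ (equivalently, integrating over $A_R\subset\R^N$) and using the positivity of the zero-order coefficient yields $H^+\equiv 0$, i.e., $T\widehat{v}\leq 0$ a.e.\ in $\widehat{A}_R$, which is the desired monotonicity.

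The main obstacle is turning this formal maximum principle into a rigorous one, since the potential $1-A'(\theta)/r^2$ blows up as $\theta\to 0^+$ or $\theta\to(\pi/2)^-$ and $H$ is only of class $W^{1,p'}$. I would handle this by approximation, truncating the domain to $\{R<r<L,\ \varepsilon<\theta<\pi/2-\varepsilon\}$ where the coefficients are bounded, running the weak maximum principle there, and then passing to the limits $L\to\infty$ and $\varepsilon\to 0$ using the decay of $H$ and the vanishing of $H$ on the axes established above. Alternatively, the statement can be obtained by invoking \cite[Proposition 2.10]{boscaggin2023multiplicity} essentially verbatim, since its proof relies only on the sign conditions $Tw\leq 0$ and $T\widehat{u}\leq 0$ and not on the particular structure of the nonlinearity.
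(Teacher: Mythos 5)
The paper does not prove Proposition \ref{point-wise-invariance-single-equation}; it states, immediately before it, that ``the next proposition follows from \cite[Proposition 2.10]{boscaggin2023multiplicity}.'' Your fallback observation---that one can invoke \cite[Proposition 2.10]{boscaggin2023multiplicity} essentially verbatim, since its argument depends only on the sign conditions $Tw\leq 0$ and $T\widehat{u}\leq 0$ and not on the structure of the nonlinearity---is therefore exactly the paper's route, and it is legitimate.

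Your primary sketch takes a genuinely different, more explicit path: differentiate the reduced equation by $T=\partial_\theta$, use the commutator identity $[T,\widetilde{\Delta}]=r^{-2}A'(\theta)T$ with $A'(\theta)\leq 0$, and run a weak maximum principle on $H:=T\widehat{v}$. The algebra is correct, as are the boundary data you collect for $H$. The gap is in the maximum-principle step, and the truncation you propose does not close it. Since $d>2$, one has $d'=\frac{d}{d-1}<2$; with $\widehat{u}\in L^d(A_R)$ and $w$ bounded, elliptic regularity gives $\widehat{v}\in W^{2,d'}(A_R)$ and hence only $H\in W^{1,d'}(A_R)$. Thus $\nabla H^+\in L^{d'}$ with $d'<2$, so $\int|\nabla H^+|^2$ need not even be finite, and pairing the equation for $H$ with $H^+$ and integrating by parts is not justified as written. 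The truncation you describe only removes the unbounded potential near $\theta\in\{0,\tfrac{\pi}{2}\}$; it does not supply the missing $L^2$-integrability of $\nabla H^+$, and the passage $\varepsilon\to 0$ would also require control of the boundary terms on $\{\theta=\varepsilon\}$ and $\{\theta=\tfrac{\pi}{2}-\varepsilon\}$, which needs more regularity of $H$ up to the axes than is available. Similarly, ``$H$ decays at infinity'' is not a direct consequence of $\widehat{v}\in W^{2,p'}(A_R)\cap H^1_0(A_R)$. Bootstrapping $\widehat{v}$ to higher interior regularity is possible but changes the argument substantially; these are precisely the technicalities that the cited reference is meant to encapsulate, which is why the paper---and your fallback---simply invoke \cite[Proposition 2.10]{boscaggin2023multiplicity}.
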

We shall also need the following proposition which links the critical points of $I$ to the solutions of the system \eqref{e-1}. Its proof is inspired by an argument in \cite{moameni2020critical}. 
\begin{prop}\label{existence-of-coulpe-of-solution}
    Let $\overline{u}$ be the critical point of $I$ obtained in Lemma \ref{critical-point}. Assume that there exists $\widehat{u}\in K$ and $\widehat{v}\in W^{2,q'}(A_R)\cap H^1_0(A_R)$ such that
    \begin{equation}\label{d}
        \left\{
        \begin{aligned}
            -\Delta \widehat{u}+\widehat{u}&=a(x)\widehat{v}^{p-1}~~~\text{in}~~A_R\\
            -\Delta \widehat{v}+\widehat{v}&=b(x)\overline{u}^{q-1}~~~\text{in}~~A_R.
        \end{aligned}
        \right.
    \end{equation}
    Then $(\widehat{u},\widehat{v})$ is a nontrivial and positive solution of 
    \begin{equation}\label{e}
        \left\{
        \begin{aligned}
            -\Delta u+u&=a(x)v^{p-1}~~~\text{in}~~A_R\\
            -\Delta v+v&=b(x)u^{q-1}~~~\text{in}~~A_R.
        \end{aligned}
        \right.
    \end{equation}
\end{prop}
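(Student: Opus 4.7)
The strategy is to reduce the claim to the single identity $\widehat u=\overline u$; once this is established, $(\widehat u,\widehat v)$ solves \eqref{e} directly from \eqref{d}. To obtain it, I would insert $v=\widehat u$ into the Szulkin critical point inequality of Definition~\ref{critical-point-szulking} satisfied by $\overline u$, rewrite the resulting coupling term using \eqref{d} and Green's identity, and then invoke Lemma~\ref{convex-analysis-result} together with the strict convexity of $\Psi$ to force the equality.

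First, since $\overline u\geq 0$, the maximum principle applied to the second equation of \eqref{d} gives $\widehat v\geq 0$ in $A_R$, and then $-\Delta\widehat u+\widehat u=a\widehat v^{p-1}\geq 0$ forces $\widehat u\geq 0$. In particular $\widehat u\in K\subset V$, so Definition~\ref{critical-point-szulking} with $v=\widehat u$ reads
\begin{equation*}
\int_{A_R} b(x)\,\overline u^{q-1}(\overline u-\widehat u)\,dx+\Psi(\widehat u)-\Psi(\overline u)\geq 0.
\end{equation*}
Using the second equation of \eqref{d} in the first integral and applying Green's identity (the boundary terms on $\partial A_R$ vanish because $\overline u,\widehat u\in H^1_0(A_R)$ and $\widehat v=0$ on $\partial A_R$, while the contributions at infinity are absorbed by the $W^{2,\cdot}$-integrability after a standard truncation),
\begin{equation*}
\int_{A_R} b\,\overline u^{q-1}(\overline u-\widehat u)\,dx=\int_{A_R}\widehat v\bigl(-\Delta(\overline u-\widehat u)+(\overline u-\widehat u)\bigr)\,dx.
\end{equation*}
The first equation of \eqref{d}, together with $(p-1)(p'-1)=1$ and the non-negativity of $-\Delta\widehat u+\widehat u$, gives $\widehat v=a^{-(p'-1)}(-\Delta\widehat u+\widehat u)^{p'-1}$; this is exactly the integrand appearing in the Gateaux derivative of $\Psi$ at $\widehat u$, so the right-hand side above equals $D\Psi(\widehat u)[\overline u-\widehat u]$. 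Substituting back yields
\begin{equation*}
\Psi(\widehat u)-\Psi(\overline u)\geq D\Psi(\widehat u)[\widehat u-\overline u],
\end{equation*}
which is precisely the hypothesis of Lemma~\ref{convex-analysis-result} with $f=\Psi$.

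Because $\Psi$ is \emph{strictly} convex---being the composition of the strictly convex function $t\mapsto\frac{1}{p'}|t|^{p'}$ (for $p'>1$) with the linear bijection $u\mapsto -\Delta u+u$ furnished by Lemma~\ref{agmon-douglis-nirenberg-result}---the lemma delivers $\widehat u=\overline u$, and hence $(\widehat u,\widehat v)$ solves \eqref{e}. For nontriviality, the mountain pass value $I(\overline u)=c>0$ forces $\overline u\not\equiv 0$; then the strong maximum principle applied successively to the two equations gives $\widehat v>0$ and $\widehat u>0$ throughout $A_R$. The step I expect to be the most delicate is the rigorous justification of Green's identity on the unbounded domain $A_R$: one must check that $\widehat v\,(-\Delta\phi+\phi)\in L^1(A_R)$ for $\phi\in\{\overline u,\widehat u\}$ and that all boundary and infinity contributions vanish. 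This should follow from Lemma~\ref{agmon-douglis-nirenberg-result} (which provides $\widehat v\in W^{2,q'}(A_R)$ and hence the needed integrability), together with the compact embedding of $K$ into the relevant $L^d$-spaces supplied by Proposition~\ref{compact-embedding}, and a standard exhaustion by compactly supported cut-off functions.
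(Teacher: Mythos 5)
Your proof is correct and follows essentially the same route as the paper: the paper reaches the inequality $\Psi(\widehat u)-\Psi(\overline u)\geq D\Psi(\widehat u)[\widehat u-\overline u]$ by first packaging the computation into an auxiliary functional $J(w)=\Psi(w)-\int_{A_R}b\,\overline u^{\,q-1}w\,dx$, verifying that $\widehat u$ is a critical point of $J$, and then combining $\langle J'(\widehat u),\widehat u-\overline u\rangle=0$ with the Szulkin inequality, whereas you carry out the identical chain of substitutions (inversion of \eqref{d}, Green's identity, identification with $D\Psi(\widehat u)$) directly inside the Szulkin inequality without naming $J$. The invocation of Lemma~\ref{convex-analysis-result} with the strict convexity of $\Psi$, and the conclusion via the strong maximum principle, are the same; your explicit remark that one should justify the integration by parts on the unbounded $A_R$ is a point the paper passes over silently.
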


\begin{proof}
    We introduce the functional $J: W^{2,p'}(A_R)\cap H^1_0(A_R)\to\R$ defined by
    \begin{equation*}
        J(w)=\frac{1}{p'}\int_{A_R}\frac{|-\Delta w+w|^{p'}}{a(x)^{p'-1}}\ dx-\int_{A_R}b(x)|\overline{u}|^{q-1}w\ dx.
    \end{equation*}
    We claim that $\widehat{u}$ is a critical point of $J$. Indeed, from \eqref{d} we have
    \begin{equation}\label{f}
        \left\{
        \begin{aligned}
            \widehat{v}&=\frac{1}{a(x)^{p'-1}}|-\Delta \widehat{u}+\widehat{u}|^{p'-1},~~~~~~p'=\frac{p}{p-1}\\
            \overline{u}&=\frac{1}{b(x)^{q'-1}}|-\Delta \widehat{v}+\widehat{v}|^{q'-1},~~~~~~~q'=\frac{q}{q-1}.
        \end{aligned}
        \right.
    \end{equation}
    Now, for all $\phi\in W^{2,p'}(A_R)\cap H^1_0(A_R)$,
    \begin{align*}
        \langle J'(\widehat{u}),\phi&\rangle~=\int_{A_R}\frac{1}{a(x)^{p'-1}}|-\Delta \widehat{u}+\widehat{u}|^{p'-1}(-\Delta\phi+\phi)\ dx-\int_{A_R}b(x)\overline{u}^{q-1}\phi\ dx\\
        &\stackrel{\eqref{f}}{=} \int_{A_R}\widehat{v}(-\Delta\phi+\phi)\ dx-\int_{A_R}b(x)|\overline{u}|^{q-1}\phi\ dx\\
        &~=\int_{A_R}(-\Delta\widehat{v}+\widehat{v})\phi\ dx-\int_{A_R}b(x)|\overline{u}|^{q-1}\phi\ dx\\
        &\stackrel{\eqref{d}}{=}\int_{A_R}b(x)\overline{u}^{q-1}\phi\ dx-\int_{A_R}b(x)\overline{u}^{q-1}\phi\ dx\\
        &~~=0.
    \end{align*}
    Thus $\widehat{u}$ is a critical point of $J$. In particular, for $\phi=\widehat{u}-\overline{u}$, there holds
    \begin{align*}
        0=\langle J'(\widehat{u}), \widehat{u}-\overline{u}\rangle&=\int_{A_R}\frac{1}{a(x)^{p'-1}}|-\Delta\widehat{u}+\widehat{u}|^{p'-1}(-\Delta(\widehat{u}-\overline{u})+\widehat{u}-\overline{u})\ dx\\
        &~~~~-\int_{A_R}b(x)\overline{u}^{q-1}(\widehat{u}-\overline{u})\ dx
    \end{align*}
    that is
    \begin{equation}\label{g}
        \int_{A_R}\frac{1}{a(x)^{p'-1}}|-\Delta\widehat{u}+\widehat{u}|^{p'-1}(-\Delta(\widehat{u}-\overline{u})+\widehat{u}-\overline{u})\ dx=\int_{A_R}b(x)\overline{u}^{q-1}(\widehat{u}-\overline{u})\ dx.
    \end{equation}
    Now using that $\overline{u}$ is a critical point of $I$, we have by Definition \ref{critical-point-szulking} that 
    \begin{equation}\label{h}
        \frac{1}{p'}\int_{A_R}\frac{|-\Delta w+w|^{p'}}{a(x)^{p'-1}}\ dx-\frac{1}{p'}\int_{A_R}\frac{|-\Delta\overline{u}+\overline{u}|^{p'}}{a(x)^{p'-1}}\ dx\geq \int_{A_R}b(x)\overline{u}^{q-1}(w-\overline{u})\ dx,~~\forall w\in K.
    \end{equation}
    Letting in particular $w=\widehat{u}$ in \eqref{h}, we obtain, thanks to \eqref{g}, that
    \begin{align*}
        &\frac{1}{p'}\int_{A_R}\frac{|-\Delta \widehat{u}+\widehat{u}|^{p'}}{a(x)^{p'-1}}\ dx-\frac{1}{p'}\int_{A_R}\frac{|-\Delta\overline{u}+\overline{u}|^{p'}}{a(x)^{p'-1}}\ dx\\
        &~~~~~\geq \int_{A_R}\frac{1}{a(x)^{p'-1}}|-\Delta\widehat{u}+\widehat{u}|^{p'-1}(-\Delta(\widehat{u}-\overline{u})+\widehat{u}-\overline{u})\ dx.
    \end{align*}
    Now, by Lemma \ref{convex-analysis-result}, it follows that $\widehat{u}=\overline{u}$. The proof is therefore completed considering $\overline{u}=\widehat{u}$ in \eqref{d}. Notice that the nontriviality of $(\widehat{u},\widehat{v})$ follows from the fact that the critical point $\overline{u}$ is nontrivial. On the other hand, the positivity of $(\widehat{u},\widehat{v})$ is a consequence of the strong maximum principle.
\end{proof}
We are now in position to prove Theorem \ref{first-main-result}.

\begin{proof}[Proof of Theorem \ref{first-main-result}~(completed)]
    We first recall that by Proposition \ref{compact-embedding}, the embedding $K\hookrightarrow L^q(A_R)$ is compact for
    \begin{align*}
        &p'< q<\frac{(n+1)p'}{(n+1)-2p'}~~~\text{if}~~(n+1)-2p'>0\\
        &p'< q<\infty\quad\quad\quad\quad\quad\quad\text{if}~~(n+1)-2p'\leq0. 
    \end{align*}
    This can be rewritten, thanks to Remark \ref{rmk2}, as
    \begin{align*}
        &\frac{1}{p}+\frac{1}{q}>1-\frac{2}{n+1}=\min\Big\{1-\frac{2}{n+1},1-\frac{2}{m+1}\Big\}~~~\text{if}~~n>\frac{p+1}{p-1}\\
        &\text{no lower bound condition on}~~\frac{1}{p}+\frac{1}{q}\quad\quad\quad\quad\quad\quad\quad~\text{if}~~n\leq\frac{p+1}{p-1}.
    \end{align*}
    Now, it follows from Lemma \ref{critical-point} that $I$ has a critical point $\overline{u}$ characterized by
    \begin{equation*}
    I(\overline{u})=\inf_{\gamma\in \Gamma}\sup_{t\in [0,1]}I(\gamma(t))=:c
\end{equation*}
where $\Gamma=\{\gamma\in C([0,1],V): \gamma(0)=0\neq\gamma(1),~I(\gamma(1))\leq0\}$. By Proposition \ref{point-wise-invariance-single-equation} there exists $\widehat{v}\in K$ satisfying
\begin{equation}
    -\Delta\widehat{v}+\widehat{v}=\overline{b}(x)\overline{u}^{q-1}~~\text{in}~~A_R
\end{equation}
where $\overline{b}$ satisfies assumption $(\cH)$. On the other hand, applying again Proposition \ref{point-wise-invariance-single-equation}, we find $\widehat{u}\in K$ satisfying
\begin{equation}
    -\Delta\widehat{u}+\widehat{u}=\overline{a}(x)\widehat{v}^{p-1}~~\text{in}~~A_R
\end{equation}
where $\overline{a}$ satisfies assumption $(\cH)$. In summary, there exists $(\widehat{u},\widehat{v})$ satisfying 
\begin{equation*}
    \left\{
    \begin{aligned}
        -\Delta\widehat{u}+\widehat{u}&=a(x)\widehat{v}^{p-1}~~\text{in}~~A_R\\
        -\Delta\widehat{v}+\widehat{v}&=b(x)\overline{u}^{q-1}~~\text{in}~~A_R.
    \end{aligned}
    \right.
\end{equation*}
It then follows from Proposition \ref{existence-of-coulpe-of-solution} that $(\widehat{u},\widehat{v})$ is a nontrivial and positive solution of 
\begin{equation*}
    \left\{
    \begin{aligned}
        -\Delta u+u&=a(x) v^{p-1}~~\text{in}~~A_R\\
        -\Delta v+v&=b(x) u^{q-1}~~\text{in}~~A_R.  
    \end{aligned}
    \right.
\end{equation*}
This finishes the proof.
\end{proof}

\section{Existence of non-radial solutions}\label{section:non-radial solutions}

In this section, we analyze the symmetry breaking and multiplicity result of solutions of \eqref{e-1} obtained in Theorem \ref{first-main-result} when $a(x)=b(x)=1$, that is, 
\begin{equation}\label{e-1-radial}
\left\{
\begin{aligned}
-\Delta u+u&=v^{p-1}~~~~\text{in}~~A_R\\
-\Delta v+v&=u^{q-1}~~~~\text{in}~~A_R\\
u, v&>0~~~~~~~~~\text{in}~~A_R\\
u=v&=0~~~~~~~~~\text{on}~~\partial A_R.
\end{aligned}
\right.
\end{equation}
To this end, the following theorem is of key importance.
\begin{thm}\label{t}
    Let $q\geq p\geq2$ and let $(u,v)$ be a solution of \eqref{e-1-radial}. Then
    \begin{equation}\label{t0}
        \inf_{0\neq\phi\in H^1_0(A_R)}\frac{\int_{A_R}|\nabla\phi|^2\ dx+\int_{A_R}|\phi|^2\ dx}{\int_{A_R}\phi^2v(x)^{\frac{p-2}{2}}u(x)^{\frac{q-2}{2}}\ dx}\leq \sqrt{\frac{q}{p}}.
    \end{equation}
\end{thm}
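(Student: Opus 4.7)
The inequality is equivalent to bounding from above the first eigenvalue of the weighted problem $(-\Delta+1)\phi=\mu w\phi$ in $H^1_0(A_R)$ with weight $w(x)=v(x)^{(p-2)/2}u(x)^{(q-2)/2}$ by $\sqrt{q/p}$. My strategy is to exhibit a specific admissible test function whose Rayleigh quotient $R(\phi):=(\int|\nabla\phi|^2+\phi^2)/(\int\phi^2 w)$ realizes the bound. Two structural identities obtained from the system will be central: the Nehari-type relations $\int(|\nabla u|^2+u^2)=\int uv^{p-1}$ and $\int(|\nabla v|^2+v^2)=\int vu^{q-1}$ (testing each PDE against $u$ and $v$), and the Hamiltonian symmetry $\int u^q=\int v^p$, coming from $\int v(-\Delta u+u)=\int u(-\Delta v+v)$.

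I would try the family $\phi=u^\alpha v^{1-\alpha}$ for $\alpha\in(0,1)$ to be chosen. Writing $\log\phi=\alpha\log u+(1-\alpha)\log v$ and using the pointwise relations $\Delta u/u=1-v^{p-1}/u$ and $\Delta v/v=1-u^{q-1}/v$ coming from \eqref{e-1-radial}, the constraint $\alpha+(1-\alpha)=1$ eliminates the constant term and yields
\[
\frac{-\Delta\phi+\phi}{\phi} \;=\; \alpha\frac{v^{p-1}}{u}\,+\,(1-\alpha)\frac{u^{q-1}}{v}\,+\,\alpha(1-\alpha)\Bigl|\frac{\nabla u}{u}-\frac{\nabla v}{v}\Bigr|^2.
\]
Multiplying by $\phi^2$ and integrating, the numerator becomes $\alpha\int u^{2\alpha-1}v^{p+1-2\alpha}+(1-\alpha)\int u^{q+2\alpha-1}v^{1-2\alpha}+\alpha(1-\alpha)\int\phi^2|\nabla\log(u/v)|^2$, while the denominator equals $\int u^{2\alpha+(q-2)/2}v^{(p+2)/2-2\alpha}$. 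I would then apply H\"older's inequality to each of the two main numerator integrals, splitting the powers of $u$ and $v$ so as to extract one factor of the denominator and one power of the common value $\int u^q=\int v^p$; the natural candidate $\alpha=q/(p+q)$ (which balances the weights of $u$ and $v$ according to the Hamiltonian scaling) is expected to deliver the asymmetric constant $\sqrt{q/p}$ upon optimization.

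The main obstacle will be the nonnegative gradient remainder $\alpha(1-\alpha)\int\phi^2|\nabla\log(u/v)|^2$, which enters the numerator with the wrong sign and must be controlled in terms of $\int\phi^2 w$. My plan is to rewrite this remainder via further integration by parts against the system, using $\Delta\log(u/v)=v^{p-1}/u-u^{q-1}/v-|\nabla u/u|^2+|\nabla v/v|^2$ to re-express $|\nabla\log(u/v)|^2$ as a combination of quantities already appearing in the denominator. Should this direct route not close cleanly, a backup plan is to test with the linear combination $\phi=c_1 u+c_2 v$ (for which no gradient residue arises) and expand $R$ in closed form using $\int u^q=\int v^p$ and Cauchy--Schwarz on the cross-terms $\int uv^{p-1}$ and $\int vu^{q-1}$ against the weight integrals; the $\sqrt{q/p}$ bound would then follow from optimizing the resulting $2\times2$ generalized eigenvalue problem in $(c_1,c_2)$.
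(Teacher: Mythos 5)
Your proposal misses the key ingredient of the paper's proof: a \emph{pointwise} comparison between the two components, namely
\[
q\,v(x)^{p}\ \geq\ p\,u(x)^{q}\qquad\text{for all }x\in A_R,
\]
which the paper establishes by a Souplet-type argument (consider $w=u-\tau v^{l}$ with $l=p/q$, $\tau=l^{-1/q}$, show $\Delta w-w\geq 0$ on $\{w\geq 0\}$, and conclude $w\leq 0$ by testing with $(w-\varepsilon)^{+}$). Once this is in hand, the theorem follows immediately by testing the Rayleigh quotient with the single function $\phi=v$: the Nehari relation $\int(|\nabla v|^{2}+v^{2})=\int u^{q-1}v$ gives the numerator, while the pointwise bound yields $v^{(p+2)/2}u^{(q-2)/2}\geq\sqrt{p/q}\,u^{q-1}v$ and hence lower-bounds the denominator, producing exactly $\sqrt{q/p}$.

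Your two proposed routes cannot close without this pointwise lemma. In the power ansatz $\phi=u^{\alpha}v^{1-\alpha}$ you correctly identify the gradient remainder $\alpha(1-\alpha)\int\phi^{2}|\nabla\log(u/v)|^{2}$, which is nonnegative and therefore pushes the Rayleigh quotient \emph{up} for every $\alpha\in(0,1)$; the only endpoints where it vanishes are $\alpha=0$ (i.e.\ $\phi=v$, which then needs the pointwise bound) and $\alpha=1$ (i.e.\ $\phi=u$, which goes in the wrong direction, since $pu^{q}\leq qv^{p}$ favors $v$, not $u$). Re-expressing the remainder via $\Delta\log(u/v)$ does not help because integration by parts cannot recover a sign that is pointwise lost. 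The backup route $\phi=c_{1}u+c_{2}v$ faces the same obstruction: the $c_{2}^{2}$-coefficients in numerator and denominator are $\int u^{q-1}v$ and $\int u^{(q-2)/2}v^{(p+2)/2}$, and the needed inequality between them is precisely the integrated form of the pointwise comparison, while H\"older in the opposite direction (bounding the numerator entries by the denominator ones and $\int u^{q}=\int v^{p}$) would require inequalities such as $v^{p/2}\lesssim u^{q/2}$ that do not hold. In short, neither interpolation nor the Hamiltonian identity $\int u^{q}=\int v^{p}$ can substitute for the pointwise Souplet-type bound, which is the genuinely new ingredient here.
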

\begin{proof}
To prove that the identity \eqref{t0} holds true, we start by establishing the following pointwise comparison property between components of solutions
\begin{equation}\label{claim}
    qv(x)^p\geq pu(x)^q,~~\forall x\in A_R.
\end{equation}
We follow the ideas of \cite[Lemma 2.7]{souplet2009proof}. Set $l=\frac{p}{q}\in(0,1]$ and define $w(x)=u(x)-\tau v(x)^l$ with $\tau=l^{-\frac{1}{q}}$. Then a simple calculation shows that 
    \begin{align*}
        \Delta w&=\Delta u-\tau lv^{l-1}\Delta v-\tau l(l-1)v^{l-2}|\nabla v|^2\\ 
        &\geq \Delta u-\tau lv^{l-1}\Delta v\\
        &=u-v^{p-1}+\tau lv^{l-1}(u^{q-1}-v)\\
        &=\tau lv^{l-1}u^{q-1}-v^{p-1}+u-\tau lv^{l}\\
        &\geq \tau lv^{l-1}u^{q-1}-v^{p-1}+u-\tau v^{l}~~~~\text{as}~~l\leq1,
    \end{align*}
    that is,
    \begin{align*}
        \Delta w-w&\geq \tau lv^{l-1}u^{q-1}-v^{p-1}\\
        &=v^{l-1}\Big(\frac{u^{q-1}}{\tau^{q-1}}-v^{l(q-1)}\Big).
    \end{align*}
    From the identity above, there holds
    \begin{equation*}
        \Delta w-w\geq0~~\text{in the set}~~\{x\in A_R: w(x)\geq0\}.
    \end{equation*}
    In particular, for every $\varepsilon>0$ sufficiently small, we have
    \begin{equation*}
        (w-\varepsilon)^+(\Delta w-w)\geq0.
    \end{equation*}
    Integrating the above identity over $A_R$ we get 
    \begin{equation*}
        \int_{A_R}|\nabla(w-\varepsilon)^+|^2\ dx+\int_{A_R}|(w-\varepsilon)^+|^2\ dx+\varepsilon\int_{A_R}(w-\varepsilon)^+\ dx\leq0.
    \end{equation*}
    So, $(w-\varepsilon)^+=0$, that is, $w\leq\varepsilon$. Since $\varepsilon$ was chosen arbitrarily small, the inequality \eqref{claim} follows. 

    Now, from \eqref{claim}, we deduce that
    \begin{equation}
        v^2v^{\frac{p-2}{2}}u^{\frac{q-2}{2}}\geq\sqrt{\frac{p}{q}}u^{q-1}v.
    \end{equation}
    Hence,
    \begin{align*}
        \inf_{0\neq\phi\in H^1_0(A_R)}\frac{\int_{A_R}|\nabla\phi|^2\ dx+\int_{A_R}|\phi|^2\ dx}{\int_{A_R}\phi^2v(x)^{\frac{p-2}{2}}u(x)^{\frac{q-2}{2}}\ dx}&\leq \frac{\int_{A_R}|\nabla v|^2\ dx+\int_{A_R}|v|^2\ dx}{\int_{A_R}v^2v(x)^{\frac{p-2}{2}}u(x)^{\frac{q-2}{2}}\ dx}\\
        &\leq\frac{\int_{A_R}u^{q-1}v\ dx}{\sqrt{\frac{p}{q}}\int_{A_R}u^{q-1}v\ dx}=\sqrt{\frac{q}{p}}.
    \end{align*}
\end{proof}
We are now ready to prove Theorem \ref{second-main-result}.
\begin{proof}[Proof of Theorem \ref{second-main-result}]
    Let $(u,v)$ be solution of \eqref{e-1-radial} obtained in Theorem \ref{first-main-result}. We recall that $u$ has the following variational characterization
    \begin{equation}\label{critical-characterization}
    I(u)=\inf_{\gamma\in \Gamma}\sup_{t\in [0,1]}I(\gamma(t))=:c
\end{equation}
where $\Gamma=\{\gamma\in C([0,1],V): \gamma(0)=0\neq\gamma(1),~I(\gamma(1))\leq0\}$.

Now, suppose by contradiction that $(u,v)$ is radial. Let $(\lambda_1,\phi)$ be the first eigenpair of
    \begin{equation}
        \left\{
        \begin{aligned}
            -\phi''(r)-\frac{(N-1)\phi'(r)}{r}+\frac{2N\phi(r)}{r^2}+\phi(r)&=\lambda_1v(r)^{\frac{p-2}{2}}u(r)^{\frac{q-2}{2}}\phi(r),~~~~r\in(R,\infty)\\
            \phi(r)&=0,~~~~~~~~~~~~~~~~~~~~~~~~~~~~~~~r=R.
        \end{aligned}
        \right.
    \end{equation}
Consider also the first eigenpair $(\mu_1,\eta)$ of the eigenvalue problem
\begin{equation}\label{i}
    \left\{
    \begin{aligned}
        -\eta''(\theta)+\eta'(\theta)\kappa(\theta)&=\mu_1\eta(\theta)~~~~\text{in}~~(0,\frac{\pi}{2})\\
        \eta&>0~~~~~~~~~~~\text{in}~~(0,\frac{\pi}{2})\\
        \eta'(0)=\eta'(\frac{\pi}{2})&=0
    \end{aligned}
    \right.
\end{equation}
where $\kappa(\theta)=(m-1)\tan(\theta)-\frac{(n-1)}{\tan(\theta)}$. It is known that $\mu_1=2N$ and $\eta(\theta)=\frac{m-n}{N}-\cos(2\theta)$.
    
We now set $w(x)=w(r,\theta)=\phi(r)\eta(\theta)$. Then,
    \begin{align}
     \nonumber   -\Delta w(x)&+w(x)=-w_{rr}-\frac{(N-1)w_r}{r}-\frac{w_{\theta\theta}}{r^2}+\frac{w_{\theta}}{r^2}\kappa(\theta)+w(r,\theta)\\
     \nonumber  &=-\phi_{rr}(r)\eta(\theta)-\frac{(N-1)\phi_r(r)\eta(\theta)}{r}-\frac{\phi(r)\eta''(\theta)}{r^2}+\frac{\phi(r)\eta'(\theta)}{r^2}\kappa(\theta)+\phi(r)\eta(\theta)\\
     \nonumber  &=-\phi_{rr}(r)\eta(\theta)-\frac{(N-1)\phi_r(r)\eta(\theta)}{r}+\frac{2N\phi(r)\eta(\theta)}{r^2}+\phi(r)\eta(\theta)\\
        &=\lambda_1v(r)^{\frac{p-2}{2}}u(r)^{\frac{q-2}{2}}\phi(r)\eta(\theta)=\lambda_1v(|x|)^{\frac{p-2}{2}}u(|x|)^{\frac{q-2}{2}}w(x). 
    \end{align}
Now, define $\gamma_{\tau}(t)=t(u+\tau w)k$, where $k>0$ is chosen in such a way that $I((u+\tau w)k)\leq0$ for all $|\tau|\leq1$. By definition, $\gamma_{\tau}\in\Gamma$. Notice that there exists a unique smooth real function $f$ on a small neighborhood of zero with $f'(0)=0$ and $f(0)=\frac{1}{k}$ such that $\max_{t\in[0,1]}I(\gamma_{\tau}(t))=I(f(\tau)(u+\tau w)k)$.  

Let $g:\R\to\R$ be given by
	\begin{equation*}
	g(\tau)=I(f(\tau)(u+\tau w)k)-I(u). 
	\end{equation*}
	Then $g(0)=0$. Moreover a simple calculation yields $g'(0)=0$ since $I'(u)=0$. 
 
 We claim that if \eqref{condition-for-symmetry-breaking} holds true, then
 \begin{equation}
     g''(0)<0. 
 \end{equation}
 Indeed,
 \begin{align*}
     g''(0)&=I''(u)(w,w)\\
     &=(p'-1)\int_{A_R}|-\Delta u+u|^{p'-2}(-\Delta w+w)^2\ dx-(q-1)\int_{A_R}|u|^{q-2} w^2\ dx\\ 
     &=(p'-1)\lambda_1^2\int_{A_R}(v(|x|)^{p-1})^{p'-2}v(|x|)^{p-2}u(|x|)^{q-2} w(x)^2\ dx-(q-1)\int_{A_R}|u|^{q-2}w^2 \\
     &=(p'-1)\lambda_1^2\int_{A_R}u(|x|)^{q-2} w(x)^2\ dx-(q-1)\int_{A_R}|u|^{q-2}w^2\ dx\\
     &=((p'-1)\lambda_1^2-(q-1))\int_{A_R}u(|x|)^{q-2} w(x)^2\ dx.
 \end{align*}
 Thus
 \begin{equation*}
     g''(0)<0~~\text{if and only if}~~\lambda_1^2<\frac{q-1}{p'-1}=(p-1)(q-1).
 \end{equation*}
 Now, by definition,
 \begin{align*}
     \lambda_1&=\inf_{0\neq\phi\in H^1_0(A_R)}\frac{\int_{A_R}|\nabla\phi|^2\ dx+2N\int_{A_R}\frac{\phi^2}{|x|^2}\ dx+\int_{A_R}|\phi|^2\ dx}{\int_{A_R}\phi^2v(x)^{\frac{p-2}{2}}u(x)^{\frac{q-2}{2}}\ dx}\\
     &\leq \inf_{0\neq\phi\in H^1_0(A_R)}\frac{\Big(1+\frac{2N}{\Lambda_H}\Big)\int_{A_R}|\nabla\phi|^2\ dx+\int_{A_R}|\phi|^2\ dx}{\int_{A_R}\phi^2v(x)^{\frac{p-2}{2}}u(x)^{\frac{q-2}{2}}\ dx}\\
     &\leq \inf_{0\neq\phi\in H^1_0(A_R)}\frac{\Big(1+\frac{2N}{\Lambda_H}\Big)\Big(\int_{A_R}|\nabla\phi|^2\ dx+\int_{A_R}|\phi|^2\ dx\Big)}{\int_{A_R}\phi^2v(x)^{\frac{p-2}{2}}u(x)^{\frac{q-2}{2}}\ dx}\\
     &\leq\sqrt{\frac{q}{p}}\Big(1+\frac{2N}{\Lambda_H}\Big).
 \end{align*}
 In the latter, we have used Theorem \ref{t}. So, if
 \begin{equation}\label{condition-break-symmetry}
     \frac{q}{p}\Big(1+\frac{2N}{\Lambda_H}\Big)^2<(p-1)(q-1),
 \end{equation}
then
\begin{equation*}
    \lambda_1^2<(p-1)(q-1)
\end{equation*}
and thus
\begin{equation*}
    g''(0)<0. 
\end{equation*}
This implies that, assuming \eqref{condition-break-symmetry}, 
	\begin{equation*}
	\max_{t\in[0,1]}I(\gamma_{\tau}(t))=I(f(\tau)(u+\tau w)k)<I(u)
	\end{equation*}
	for small $\tau$. So, there exists $\tau>0$ small enough such that
	\begin{equation*}
	c\leq\max_{t\in[0,1]}I(\gamma_{\tau}(t))<I(u).
	\end{equation*}
	This contradicts \eqref{critical-characterization}.
\end{proof}
To prove Theorem \ref{third-main-result}, we first establish the following lemma which shows that solutions are distinct for different decompositions of $\R^N$. 
\begin{lemma}\label{distinct-solutions}
	Let $1< n<n'\leq\Big\lfloor\frac{N}{2}\Big\rfloor$ and let $m=N-n$ and $m'=N-n'$. Let $(u_{m,n},v_{m,n})$ resp. $(u_{m',n'},v_{m',n'})$ be the solution obtained Theorem \ref{first-main-result} corresponding to the decomposition $\R^m\times\R^n$ resp. $\R^{m'}\times\R^{n'}$ of $\R^N$. Then $(u_{m,n},v_{m,n})\neq (u_{m',n'}, v_{m',n'})$ unless they are both radial functions.  
\end{lemma}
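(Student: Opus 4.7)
The plan is to show the contrapositive: if $(u_{m,n}, v_{m,n}) = (u_{m',n'}, v_{m',n'})$, then both components must be radial. I argue component-wise, treating the common first component $u := u_{m,n} = u_{m',n'}$; the same argument applies to the second component $v$.

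The key observation is that $u$ is simultaneously invariant under the actions of $O(m) \times O(n)$ and $O(m') \times O(n')$, embedded in $O(N)$ in the corresponding block-diagonal way. Since $n < n'$ forces $m > m'$, I decompose $\R^N = \R^{m'} \oplus \R^{m - m'} \oplus \R^n$ and write a typical point as $x = (y, z, w)$ with $y \in \R^{m'}$, $z \in \R^{m - m'}$, $w \in \R^n$. Setting $a = |y|$, $b = |z|$, $c = |w|$, the $O(m) \times O(n)$-invariance gives $u(x) = F(\sqrt{a^2 + b^2}, c)$, while the $O(m') \times O(n')$-invariance gives $u(x) = G(a, \sqrt{b^2 + c^2})$, for some functions $F$ and $G$ defined on the relevant region of the positive quadrant.

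Next I fix a radius $r > R$ and parameterize the portion of the sphere $\{|x| = r\} \cap A_R$ corresponding to $(a, b, c)$ with $a, c \in [0, r]$, $a^2 + c^2 \leq r^2$, and $b = \sqrt{r^2 - a^2 - c^2}$. Plugging into the identity $F = G$ on this parameterization yields
\begin{equation*}
F\bigl(\sqrt{r^2 - c^2},\, c\bigr) \;=\; G\bigl(a,\, \sqrt{r^2 - a^2}\bigr).
\end{equation*}
The left-hand side is independent of $a$ and the right-hand side independent of $c$, so both sides equal a function $h(r)$ depending only on $r$. Consequently $G(a, \sqrt{r^2 - a^2})$ is constant along the entire arc $\{a^2 + t^2 = r^2,\, a, t \geq 0\}$ for every $r > R$. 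Since every point of the first quadrant lies on a unique such arc, this forces $G$ to be a function of $a^2 + t^2$ alone, meaning $u$ is radial on $A_R$. Repeating the argument verbatim for $v$ completes the proof.

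The only delicate point is the coordinate bookkeeping and verifying that the parameterization genuinely lets us vary $a$ and $c$ independently while keeping $r$ fixed (which relies on $m > m'$, so the middle block $\R^{m - m'}$ is nontrivial); beyond that the argument is an elementary application of the two symmetry constraints. No compactness or variational input is needed here — the lemma is purely a structural statement about functions invariant under two distinct product groups.
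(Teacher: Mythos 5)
Your proof is correct and rests on the same core idea as the paper's: exploit the overlap between the two block decompositions (nontrivial because $m>m'$) to force the function to be constant on spheres. The paper achieves this more tersely by restricting to two-dimensional slices with only two nonzero coordinates (one in $\{1,\dots,m'\}$ and one in $\{m'+1,\dots,m\}$, then similarly on the other side), whereas you work systematically with the three radial variables $(a,b,c)$ and a separation-of-variables observation on each sphere $|x|=r$; your version is cleaner and sidesteps the coordinate-index bookkeeping that the paper's sketch handles somewhat loosely.
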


\begin{proof}
	The proof of this lemma takes inspiration from \cite[Lemma 4.1]{cowan2022supercritical}. Assume that $(u_{m,n},v_{m,n})=(u_{m',n'},v_{m',n'})$ and let us prove that there are both radial. Since $u_{m,n}=u_{m',n'}$ then $u_{m,n}(s,t)=u_{m',n'}(s',t')$ where
	\begin{equation*}
	s=\sqrt{x_1^2+\cdots+x_m^2},~~~~~~ t=\sqrt{x_{m+1}^2+\cdots+x_N^2},
	\end{equation*}
	and
	\begin{equation*}
	s'=\sqrt{x_1^2+\cdots+x_{m'}^2},~~~~~~ t'=\sqrt{x_{m'+1}^2+\cdots+x_N^2}.
	\end{equation*}
	In particular, assuming $x_i=0$ for all $i\neq 1, m'$ we get that
	\begin{equation*}
	u_{m,n}(|x_1|,|x_{m'}|)=u_{m',n'}(\sqrt{x_1^2+x_{m'}^2},0) 
	\end{equation*}
	and thus, $u_{m,n}$ must be radial. On the other hand, assuming $x_i=0$ for all $i\neq m+1, N$, we have
 \begin{equation*}
     u_{m',n'}(|x_{m+1}|,|x_N|)=u_{m,n}(0,\sqrt{x_{m+1}^2+x_N^2}).
 \end{equation*}
 So, $u_{m',n'}$ is radial. Thus, $u_{m,n}$ and $u_{m',n'}$ are both radial functions. In the same manner, we also show that $v_{m,n}$ and $v_{m',n'}$ are radial. This completes the proof.
\end{proof}
We now give the proof of Theorem \ref{third-main-result}.
\begin{proof}[Proof of Theorem \ref{third-main-result}]
    By Theorem \ref{first-main-result}, it follows that for any $n\leq k$ and $q\geq p>2$, problem \eqref{e-1-radial} has a positive solution $(u_{m,n},v_{m,n})=(u_{m,n}(s,t),v_{m,n}(s,t))$ where
    \begin{equation*}
        s=\sqrt{x_1^2+\cdots+x_m^2},\quad\quad\quad t=\sqrt{x_{m+1}^2+\cdots+x_N^2} 
    \end{equation*}
    provided that
    \begin{equation}
        \frac{1}{p}+\frac{1}{q}>1-\frac{2}{n+1}~~~~\text{for}~~~n>\frac{p+1}{p-1}.
    \end{equation}
    Since $n\leq k$ then 
    \begin{equation*}
        1-\frac{2}{n+1}\leq 1-\frac{2}{k+1}~~~~\text{for}~~~k>\frac{p+1}{p-1}.
    \end{equation*}
    So, for every $n\leq k$, problem \eqref{e-1-radial} has positive solution provided that
    \begin{equation}
        \frac{1}{p}+\frac{1}{q}>1-\frac{2}{k+1}~~~~\text{for}~~~k>\frac{p+1}{p-1}.
    \end{equation}
    On the other hand, if $k\leq\frac{p+1}{p-1}$ then $n\leq\frac{p+1}{p-1}$ and thus \eqref{e-1-radial} has positive solution $(u_{m,n},v_{m,n})$ thanks to Remark \ref{rmk1}.

    Now, by Theorem \ref{second-main-result} the solution $(u_{m,n},v_{m,n})$ of \eqref{e-1-radial} obtained in Theorem \ref{first-main-result} is non-radial if
    \begin{equation*}
        (p-1)(q-1)>\Big(1+\frac{2N}{\Lambda_H}\Big)^2\Big(\frac{q}{p}\Big).
    \end{equation*}
    Consequently, for all $n\in\{2,\dots,k\}$ the solution $(u_{m,n},v_{m,n})$ is non-radial. On the other hand, by Lemma \ref{distinct-solutions}, $(u_{m,n},v_{m,n})\neq (u_{m',n'},v_{m',n'})$ for $n\neq n'$. This shows that we have $k-1$ distinct non-radial solutions. 
\end{proof}

 ~\\
\textbf{Acknowledgements:} The author is pleased to acknowledge the support of Fields Institute. He would like also to thank Abbas Moameni for some valuable discussion. The author finally thanks the anonymous referee who has carefully read the previous version of this paper.

\bibliographystyle{ieeetr}

\end{document}